\numberwithin{equation}{section}
\numberwithin{figure}{section}
\theoremstyle{plain}
\newtheorem{thm}{\protect\theoremname}
  \theoremstyle{remark}
  \newtheorem{claim}[thm]{\protect\claimname}
  \theoremstyle{definition}
  \newtheorem{defn}[thm]{\protect\definitionname}
  \theoremstyle{plain}
  \newtheorem{fact}[thm]{\protect\factname}
  \theoremstyle{definition}
  \newtheorem{example}[thm]{\protect\examplename}
  \theoremstyle{plain}
  \newtheorem{lem}[thm]{\protect\lemmaname}
  \theoremstyle{plain}
  \newtheorem{cor}[thm]{\protect\corollaryname}
\newcommand{\Sig}{\mathcal{S}}
\newcommand{\HM}{\mathrm{HM}}
\newcommand{\totaldeg}{\mathrm{deg}}
\newcommand{\poly}{\mathrm{poly}}
\newcommand{\sigidx}{\mathrm{index}}
  \providecommand{\claimname}{Claim}
  \providecommand{\corollaryname}{Corollary}
  \providecommand{\definitionname}{Definition}
  \providecommand{\examplename}{Example}
  \providecommand{\factname}{Fact}
  \providecommand{\lemmaname}{Lemma}
\providecommand{\theoremname}{Theorem}
\begin{document}

\title{Termination of original F5}
\begin{abstract}
The original F5 algorithm introduced by Faugère is formulated for
any homogeneous polynomial set input. The correctness of output is
shown for any input that terminates the algorithm, but the termination
itself is proved only for the case of input being regular polynomial
sequence. This article shows that algorithm correctly terminates for
any homogeneous input without any reference to regularity. The scheme
contains two steps: first it is shown that if the algorithm does not
terminate it eventually generates two polynomials where first is a
reductor for the second. But first step does not show that this reduction
is permitted by criteria introduced in F5. The second step shows that
if such pair exists then there exists another pair for which the reduction
is permitted by all criteria. Existence of such pair leads to contradiction.
\end{abstract}

\author{Vasily Galkin}

\address{Moscow State University}

\email{galkin-vv@ya.ru}

\maketitle

\section{Introduction}

The Faugère's F5 algorithm is known to be efficient method for Gröbner
basis computation but one of the main problems with it's practical
usage is lack of termination proof for all cases. The original paper
\cite{FaugereF5} and detailed investigations in \cite{F5-revisited}
states the termination for the case of reductions to zero absence,
which practically means termination proof for the case of input being
regular polynomial sequence. But for most input sequences the regularity
is not known, so this is not enough for practical implementations
termination proof. One of the approaches to solve this issue is adding
of additional checks and criteria for ensuring algorithm termination.
This approach is perfectly strict but the obtained result is termination
proof of a modified version of F5 algorithm which contain additional
checks and therefore can be more complex for implementation and possibly
slower for some input cases. Examples are \cite{Modifying-for-termination,Ars05applicationsdes,Gash:2008:ECG,ZobninGeneralization,Hashemi-ExtF5}.

The another approach is termination proof of custom F5-based algorithms
followed by attempt to reformulate original F5 in the terms of this
custom algorithm. The main problem of this approach arise during reformulation:
attempts to describe F5 in another terms may inadvertently introduce
some changes in behavior which are hard to discover but require additional
proofs to show equivalence. For example \cite{2012GrbTermination}
proofs the termination of the F5GEN algorithm which differs from original
F5 by absence of criteria check during reductor selection. The \cite{HuangConception}
gives the proof of TRB-F5 algorithm termination which has two main
differences realized by the author with great help of discussions
with John Perry. The first is another rule building scheme which eventually
lead to the ordering by signature the rules in $Rule$ array during
TRB-F5 execution. The second is the absence of applying in TRB-F5
the normal form $\varphi$ before reduction, which leads to an effect
opposite to the difference with F5GEN: criteria checks are applied
for elements with greater signature index which are used in original
F5 in the normal from operator as reductors without the checks. The
author thinks that these algorithms may be changed to exactly reproduce
the original F5 behavior and the termination proofs can be applied
to such changed versions. But the approach with algorithms equivalent
to F5 has a drawback: it makes harder to understand how the theorems
used in termination proof can be expressed in terms of original F5
behavior.

This paper introduces another approach for termination proof of original
algorithm without any modifications. The first step of proof is based
on the idea of S-pair-chains which are introduced in this paper. The
second step of proof is based on the method described in Theorem 21
of \cite{F5C} for the proof of F5C algorithm correctness: the representation
of an S-polynomial as the sum of multiplied polynomials from set computed
by F5C can be iteratively rewritten using replacements for S-pairs
and rejected S-pair parts until a representation with certain good
properties is achieved after finitely many steps.

This article shows that the hypothesis of this method can be weakened
to apply it for the set at any middle stage of F5 computations and
the conclusions can be strengthened to use them for termination proof.
The paper is designed as alternative termination proof for exact algorithm
described in \cite{FaugereF5}, so the reader is assumed to be familiar
with it and all terminology including names for algorithm steps are
borrowed from there.

\section{Possibilities for infinite cycles in F5}

\subsection{Inside \texttt{AlgorithmF5}: $d$ growth}
\begin{claim}
\label{iterations-d_grow}If the number of \textbf{while} cycle iterations
inside \texttt{AlgorithmF5} is infinite then the $d$ value infinitely
grow.\end{claim}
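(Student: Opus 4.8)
The plan is to establish the contrapositive: if the values taken by $d$ across the iterations remain bounded, then only finitely many iterations occur. So fix an invocation \texttt{AlgorithmF5}$(i,f_i,G_{i+1})$ and suppose that $d\le D$ at every iteration of its \textbf{while} loop, for some fixed integer $D$.

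As a preliminary I would record the monotonicity of $d$. An iteration removes from $P$ exactly the critical pairs of the current minimal degree $d$, and the only pairs inserted back during the same iteration are those returned by \texttt{CritPair}$(r,g)$ as $r$ ranges over the reduced elements $R_d$ and $g$ over the current $G_i$; since each such $r$ is homogeneous of degree $d$, the pair it produces with $g$ has degree $\totaldeg(\mathrm{lcm}(\HM(r),\HM(g)))\ge d$. Hence $d$ never decreases, so under our assumption it is eventually constant. This is not logically essential but it makes the situation transparent.

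The heart of the matter is a finiteness observation about the polynomials produced inside this invocation. Every element ever appended to $G_i$ here carries a signature $(e_i,t)$ with $t$ a monomial, and homogeneity forces $\totaldeg(t)=\totaldeg(r)-\totaldeg(f_i)$ for the element $r$ carrying that signature; in particular an element of degree $\le D$ has signature monomial of degree $\le D-\totaldeg(f_i)$. Furthermore the rewrite-rule machinery of F5 --- \texttt{Add-Rule} together with the \texttt{Rewritten?} tests in \texttt{Spol} and in \texttt{Find-Reductor} --- ensures that once a rule for a signature $s$ has been registered, no later S-polynomial or reductor with signature $s$ survives the check, so at most one element with each signature is ever added to $G_i$. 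Since the ambient ring has only finitely many monomials of degree $\le D-\totaldeg(f_i)$, only finitely many polynomials of degree $\le D$ are ever appended to $G_i$ in this invocation.

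The contradiction is now short. A critical pair of degree $v\le D$ can be produced in the loop over $r\in R_d$ only when both of its members have degree $\le v\le D$, so every critical pair of degree $\le D$ that ever enters $P$ is the pairing of one of the finitely many polynomials of degree $\le D$ with a member of the (always finite) $G_i$ at some moment; hence only finitely many critical pairs of degree $\le D$ are generated during the whole run. Each \textbf{while} iteration with $d\le D$ permanently removes at least one such pair from $P$ and occurs only while such pairs are present, so there are only finitely many iterations with $d\le D$; together with the standing assumption $d\le D$ this bounds the total number of iterations, contradicting infiniteness. The one step that has to be argued rather than asserted is the ``at most one element per signature'' property, since this is the only place where the F5 criteria --- as opposed to a bare degree-by-degree Buchberger scheme --- enter: it is precisely what forbids an infinite family of bounded-degree elements sharing a leading monomial but carrying pairwise incomparable signatures.
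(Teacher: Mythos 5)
Your overall strategy is genuinely different from the paper's: you argue the contrapositive by a counting chain (bounded $d$ $\Rightarrow$ finitely many admissible signature monomials $\Rightarrow$ finitely many polynomials $\Rightarrow$ finitely many critical pairs $\Rightarrow$ finitely many \textbf{while} iterations), whereas the paper shows directly that every degree-$d_{j}$ pair created during iteration $j$ is discarded at iteration $j+1$, yielding $d_{j+2}>d_{j}$. The outer skeleton of your argument is sound: the monotonicity of $d$, the degree bound $\totaldeg(t)=\totaldeg(r)-\totaldeg(f_{i})$ on signature monomials via homogeneity, the observation that every pair of degree $\leqslant D$ is formed from two members of a finite set and enters $P$ at most once, and the fact that each iteration permanently consumes at least one pair.

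The gap is the step you yourself flag but do not argue: ``at most one element per signature.'' The mechanism you offer --- once a rule for signature $s$ is registered, no later S-polynomial with signature $s$ survives the check --- fails in exactly the case that matters. \texttt{Rewritten?}$(u,r)$ inspects only rules registered \emph{after} the rule of $r$ itself. When the top part of a new S-pair is $1\cdot p$ with $p$ the already existing polynomial of signature $s$ (this happens precisely when the other generator's $\HM$ divides $\HM(p)$, so the pair has the same total degree as $p$ and multiplier $u=1$), the only rule whose monomial is guaranteed to divide the monomial of $s$ is $p$'s own rule, which \texttt{Rewritten?} excludes; the rewrite machinery alone does not block the creation of a second polynomial with signature $s$. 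Such pairs are in fact discarded, but for a different reason: at the moment $p$ was moved to $Done$, \texttt{IsReducible} had already examined the divisor as a potential reductor and rejected it by one of its checks (b), (c), (d), and each of these rejections translates into a rejection of the corresponding S-pair later --- which is precisely the case analysis that occupies most of the paper's proof of this claim. Your $u\neq1$ case does work (there the top part has strictly smaller degree, hence was created in an earlier iteration, so the rule for $s$ is later than its rule and \texttt{Rewritten?} fires), but without the $u=1$ analysis the signature-uniqueness claim, and with it the finiteness of the set of bounded-degree polynomials, remains unproved.
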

\begin{proof}
Let's suppose that there is an input $\left\{ f_{1},\ldots,f_{m}\right\} $
over $\mathcal{K}[x_{1},\ldots,x_{n}]$ for which original F5 does
not terminate, and that it is shortest input of such kind -- the algorithm
terminates on shorter input $\left\{ f_{2},\ldots,f_{m}\right\} $.
This means that last iteration of outer cycle in \texttt{incrementalF5}
does not terminate, so last call to \texttt{AlgorithmF5} does not
terminate. To investigate this we need to study how the total degree
$d$ can change during execution of the cycle inside \texttt{AlgorithmF5}.
Let's call $d_{j}$ the value of $d$ on $j$-th cycle iteration
and extend it to $d_{0}=-1$. The simple property of $d_{j}$ is it's
non-strict growth: $d_{j}\geqslant d_{j-1}$. It holds because on
the $j-1$-th iteration all polynomials in $R_{d}$ have degree $d_{j-1}$
and therefore all new generated critical pairs have degree at least
$d_{j-1}$. Now suppose that $j$ is number of some fixed iteration.
At the iteration $j$ all critical pairs with degree $d_{j}$ are
extracted from $P$. After call to \texttt{Reduction} some new critical
pairs are added to $P$ in the cycle over $R_{d}$. There exist a
possibility that some of them has degree $d_{j}$. We're going to
show that all such critical pairs do not generate S-polynomials in
the next iteration of algorithm because they are discarded.

For each new critical pair $[t,u_{1},r_{1},u_{2,}r_{2}]$ generated
during iteration $j$ at least one of the generating polynomials belong
to $R_{d}$ and no more than one belong to $G_{i}$ at the beginning
of the iteration. All polynomials in $R_{d}$ are generated by \texttt{Reduction}
function by appending single polynomials to $Done$. So we can select
from one or two $R_{d}$-belonging generators of critical pair a polynomial
$r_{k}$ that was added to $Done$ later. Then we can state that the
other S-pair part $r_{3-k}$ was already present in $G\cup Done$
at the moment of $r_{k}$ was added to $Done$. So the \texttt{TopReduction}
tries to reduce $r_{k}$ by $r_{3-k}$ but failed to do this because
one of \texttt{IsReducible} checks (a) - (d) forbids this.

From the other hand for critical pairs with degree equal to $d_{j}$
we have $u_{k}=1$ because total degree of critical pair is equal
to total degree of it's generator $r_{k}$. This means that value
$u_{3-k}$ is equal to $\frac{\HM(r_{k})}{\HM(r_{3-k})}$ so the \texttt{IsReducible}'s
rule (a) allow reduction $r_{k}$ by $r_{3-k}$. It follows that only
checks (b) - (d) are left as possibilities.

Suppose that reduction was forbidden by (b). This means that there
is a polynomial in $G_{i+1}$ that reduces $u_{3-k}\Sig(r_{3-k})$.
For our case it means that in the \texttt{CritPair} function the same
check $\varphi(u_{3-k}\Sig(r_{3-k}))=u_{3-k}\Sig(r_{3-k})$ fails
and such critical pair would not be created at all. So the rule (b)
can't forbid reduction too.

Suppose that reduction was forbidden by (c). This means that there
is a rewriting for the multiplied reductor. So for our case it means
that \texttt{Rewritten?}$\left(u_{3-k},r_{3-k}\right)$ returns true
at the moment of \texttt{TopReduction} execution, so it still returns
true for all algorithm execution after this moment because rewritings
do not disappear.

Suppose that reduction was forbidden by (d). The pseudo code in \cite{FaugereF5}
is a bit unclear at this point, but the source code of procedure \texttt{FindReductor}
attached to \cite{F5-revisited} is more clear and states that the
reductor is discarded if both monomial of signature and index of signature
are equal to those of polynomial we're reducing (it's signature
monomial is \texttt{r{[}k0{]}{[}1{]}} and index is \texttt{r{[}k0{]}{[}2{]}}
in the code):

\begin{lstlisting}[basicstyle={\ttfamily},tabsize=4]
if (ut eq r[k0][1]) and (r[j][2] eq r[k0][2]) then
	// discard reductor by criterion (d)
	continue;
end if;
\end{lstlisting}
For our case it means that signatures of $r_{k}$ and $u_{3-k}r_{3-k}$
are equal. This leads to fact that \texttt{Rewritten?}$\left(u_{3-k},r_{3-k}\right)$
returns true after adding rule corresponding to $r_{k}$ because $u_{3-k}\cdot r_{3-k}$
is rewritable by $1\cdot r_{k}$. So like in case (c) \texttt{Rewritten?}$\left(u_{3-k},r_{3-k}\right)$
returns true at the moment of \texttt{TopReduction} execution.

Now consider \texttt{Spol} function execution for some S-pair with
total degree $d_{j}$ generated during iteration $j$. It executes
in $j+1$ iteration of \texttt{AlgorithmF5} cycle which is far after
\texttt{TopReduction} execution for $r_{k}$ in algorithm flow so
for both cases (c) and (d) call to \texttt{Rewritten?}$\left(u_{3-k},r_{3-k}\right)$
inside \texttt{Spol} returns true. It means that at the $j+1$ step
no S-pair with total degree $d_{j}$ can add polynomial to $F$.

In the conclusion we have: 
\begin{itemize}
\item the first possibility of $d_{j+1}$ and $d_{j}$ comparison is $d_{j+1}=d_{j}$.
In this case $F$ is empty on $j+1$ iteration and therefore $P$
does not contain any pairs with degree $d_{j}$ after $j+1$ iteration's
finish. So $d_{j+2}>d_{j+1}$. 
\item the other possibility is $d_{j+1}>d_{j}$.
\end{itemize}
In conjunction with non-strict growth this gives $\forall j\,\, d_{j+2}>d_{j}$
which proves the claim \ref{iterations-d_grow}.
\end{proof}

\subsection{Inside \texttt{Reduction}: $ToDo$ finiteness}
\begin{claim}
\label{Every_cycle_iteration_finish}Every cycle iteration inside
\texttt{AlgorithmF5} does terminate, in particular all calls to \texttt{Reduction}
terminate.\end{claim}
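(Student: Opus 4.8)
The plan is to fix one iteration of the \textbf{while} cycle in \texttt{AlgorithmF5}, working at the total degree $d$ chosen at the top of that iteration, and reduce everything to the termination of the single call to \texttt{Reduction} made inside it. At the start of the iteration the set $P$ is finite -- it is finite at initialisation and every iteration enlarges it by only finitely many pairs -- so $P_{d}$ is finite, the resulting set $F=\texttt{Spol}(P_{d})$ is finite (each call to \texttt{Spol} does a bounded amount of work on the finite data $G_{i}$, $G_{i+1}$, $Rule$), and the closing loop that appends new critical pairs runs over the finite sets $R_{d}$ and $G$, so it terminates once \texttt{Reduction} has. Inside \texttt{Reduction}, each pass of its loop pops the element $h$ of minimal signature, applies the normal form $\varphi$ -- ordinary reduction modulo the finite homogeneous set $G_{i+1}$, which terminates for the chosen admissible ordering -- and calls \texttt{TopReduction}, whose only unbounded ingredient is \texttt{FindReductor}, a scan of the finite set $G\cup Done$. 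So the whole claim comes down to showing that the \textbf{while} loop inside \texttt{Reduction} is executed only finitely many times.

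Here I would use homogeneity. Every polynomial that ever enters $ToDo$ is homogeneous of degree $d$, and a signature $x^{\alpha}e_{k}$ that can occur satisfies $\totaldeg(x^{\alpha})+\totaldeg(f_{k})=d$, hence $\totaldeg(x^{\alpha})\le d$; therefore there is only a finite set $M_{d}$ of monomials that can be a head monomial of such a polynomial, and only a finite set $\mathcal{S}_{d}$ of signatures that can occur (at most $m$ indices times the monomials of degree $\le d$). I would then split each pass of the loop according to what \texttt{TopReduction} does to the popped $h$, as specified in \cite{FaugereF5}: (i) \texttt{FindReductor} returns nothing, $h$ is made monic and moved into $Done$ for good; (ii) a reductor $r_{j}$ with multiplier $u=\HM(h)/\HM(r_{j})$ and $u\,\Sig(r_{j})<\Sig(h)$ is used, so $\poly(h)$ is replaced by a polynomial with strictly smaller head monomial and unchanged signature and $h$ is re-queued; (iii) a reductor $r_{j}$ with $u\,\Sig(r_{j})>\Sig(h)$ is found (the inequality being strict because equality is exactly the case forbidden by criterion (d)), so a fresh polynomial $r_{N}$ with signature $u\,\Sig(r_{j})$ together with a fresh $Rule$ entry for it are created, while $h$ itself is put back into $ToDo$ with its polynomial untouched. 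Case (iii) is the obstacle: there $h$ returns with no visible progress, so the naive argument ``the head monomial strictly drops every pass'' does not apply, and one cannot even immediately rule out that the same reductor is re-used forever.

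The resolution is the bookkeeping performed by \texttt{Rewritten?}. The $Rule$ entry created in case (iii) has signature exactly $u\,\Sig(r_{j})\in\mathcal{S}_{d}$, and once this entry exists any later attempt of \texttt{FindReductor} to accept a reductor producing that same signature is rejected, because \texttt{Rewritten?} then returns true and rules are never deleted; hence distinct occurrences of case (iii) produce distinct signatures, so case (iii) happens at most $|\mathcal{S}_{d}|$ times during the call. Consequently at most $|F|+|\mathcal{S}_{d}|$ distinct elements are ever born into $ToDo$ -- the members of $F$, plus one $r_{N}$ per occurrence of case (iii). Each such element keeps a fixed signature and has a head monomial that is non-increasing along its whole life and strictly decreasing at every pass of type (ii), so type (ii) happens to it at most $|M_{d}|$ times, while type (i) happens to it at most once. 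Summing, the total number of loop passes is at most $|\mathcal{S}_{d}|+(|F|+|\mathcal{S}_{d}|)\,|M_{d}|+(|F|+|\mathcal{S}_{d}|)$, a finite bound that does not depend on the length of the run; so \texttt{Reduction} terminates, and with it the whole iteration of \texttt{AlgorithmF5}. The only loose ends to tidy up are the routine finiteness of $\varphi$, \texttt{Spol} and \texttt{CritPair} taken individually, and the degenerate possibility $\totaldeg(f_{k})=0$, which either does not occur in a genuine input or makes the computation trivial.
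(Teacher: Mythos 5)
Your proposal is correct and follows essentially the same route as the paper: both arguments split the passes of the \textbf{while} loop in \texttt{Reduction} into the three cases (move to $Done$, signature-safe reduction with strictly decreasing head monomial, creation of a new $ToDo$ element), and both bound the third case by observing that the \texttt{Rewritten?}/$Rule$ bookkeeping forces distinct signatures on the newly created elements, of which only finitely many exist at total degree $d$ by homogeneity. The only cosmetic difference is that the paper pins the signatures down to index $1$ and monomial degree exactly $d_{j}-\totaldeg(f_{1})$ while you use the coarser but equally finite bound of all signatures of degree at most $d$, and you bound the per-element reductions by $|M_{d}|$ where the paper invokes well-ordering of monomials.
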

\begin{proof}
The calls to \texttt{AlgorithmF5} corresponding to polynomials $f_{2},\dots,f_{m}$
are known to terminate, so we will study the only left call of \texttt{AlgorithmF5}
corresponding to processing of input sequence item $f_{1}$. Firstly
we need to get some facts about polynomials in $ToDo$ and $Rule$
sets inside $j$-th iteration of that call to \texttt{AlgorithmF5}.
All the critical pairs created initially by \texttt{CritPair} inside
the \texttt{AlgorithmF5} have greater S-pair part with signature index
$1$. All other critical pairs are generated with signature index
corresponding to $ToDo$ elements moved to $Done$ set. All elements
of $ToDo$ are generated either from critical pairs or in the \texttt{TopReduction}
procedure. The polynomials generated by \texttt{TopReduction} has
signatures greater than the signature of polynomial the function tries
to top-reduce which is $ToDo$ element. So, there is no place in algorithm
flow where a polynomial or critical pair with signature index different
from 1 can be generated in the \texttt{AlgorithmF5} call. From the
other hand all polynomials inside $ToDo$ has the same total degree
$d_{j}$. Together with index equality this shows that the total degree
of signature monomials is equal to the $d_{j}-\totaldeg(f_{1})$ for
all $ToDo$ elements.

Every addition of polynomial in the $Rule$ set correspond to the
addition of $ToDo$ element. So, the elements added to $Rule$ in
$j$-th iteration have total degree equal to $d_{j}$. Combining this
with non-strict $d_{j}$ growing we get that at $j$-th iteration
all elements of $Rule$ with signature index 1 have total degree $\leqslant d_{j}$
and total degree of signature $\leqslant d_{j}-\totaldeg(f_{1})$.
In addition this gives the fact about order of $Rule$ elements with
signature index 1: their total degrees are non-strictly increasing.
\begin{defn}
The reduction of labeled polynomial $r_{k}$ with a labeled polynomial
$r_{m}$ is called \emph{signature-safe} if $\Sig(r_{k})\succ t\cdot\Sig(r_{m})$,
where $t=\frac{\HM(r_{k})}{\HM(r_{m})}$ is monomial multiplier of
reductor. The reductor corresponding to signature-safe reduction is
called \emph{signature-safe reductor}. 
\end{defn}
The algorithm performs only signature-safe reductions: the \texttt{TopReduction}
function performs reduction by non-rejected reductor if it is signature-safe
and adds new element to $ToDo$ otherwise. The elements of $ToDo$
are processed in signature increasing order, so no elements of $G\cup Done$
has signature greater than signature of polynomial $r_{k}$ being
reduced in \texttt{TopReduction}. If the reductor $r_{m}$ has $\totaldeg(r_{m})=\totaldeg(r_{k})$
we have $t=1$ and $\Sig(r_{k})\succ\Sig(r_{m})$ which ensures the
signature-safety of such reduction because the case $\Sig(r_{k})=\Sig(r_{m})$
would have been rejected by (d) check in \texttt{IsReducible}. So
$\Sig(r_{k})\prec t\cdot\Sig(r_{m})$ is possible only when $\totaldeg(r_{m})<\totaldeg(r_{k})$
and all additions in $ToDo$ in \texttt{TopReduction} correspond to
such situation. The signature of polynomial added such way is $t\cdot\Sig(r_{m})$
and the fact that $r_{m}$ was not rejected by \texttt{Rewritten?}
check in \texttt{IsReducible} ensures that no polynomial with signature
$t\Sig(r_{m})$ were generated yet because such polynomial would have
rule corresponding to it in $Rule$ with greater total degree than
rule corresponding to $r_{m}$ and $r_{m}$ would be rejected in \texttt{IsReducible}. 

We want to show that only possible algorithm non-termination situation
correspond to the case of infinite $d_{j}$ growth. We showed that
non-termination leads to \texttt{AlgorithmF5} does not return, and
that it can't stuck in iterations with same $d$ value. So, the only
possibilities left are infinite $d$ growth and sticking inside some
iteration. We are going to show that such sticking is not possible.
The \texttt{AlgorithmF5} contains 3 cycles:
\begin{itemize}
\item \textbf{for} cycle inside \texttt{Spol} does terminate because it's
number of iterations is limited by a count of critical pairs which
is fixed at cycle beginning
\item \textbf{for} cycle inside \texttt{AlgorithmF5} iterating over $R_{d}$
elements also does terminate because count of $R_{d}$ elements is
fixed at cycle beginning
\item the most complex case is the \textbf{while} cycle inside \texttt{Reduction}
which iterates until $ToDo$ becomes empty. The $ToDo$ set is initially
generated by \texttt{Spol} and then extended by new elements during
\texttt{TopReduction} execution. \texttt{Spol} generates finite number
of elements because it terminates and the \texttt{TopReduction} adds
elements with distinct signatures having index 1 so their number is
limited by the count of different signatures of total degree $d_{j}-\totaldeg(f_{1})$,
so only finite number of elements is added in $ToDo$. We will show
that all types of steps that perform \texttt{Reduction} can be performed
only finitely number of times:

\begin{itemize}
\item the step when \texttt{IsReducible} returns empty set correspond to
transferring $ToDo$ element in $Done$ and the number of such steps
is limited by number of elements added in $ToDo$
\item the step when \texttt{IsReducible} returns reductor which is not signature-safe
correspond to adding new element in $ToDo$ and the number of such
steps is limited by the number of possible additions.
\item the step when \texttt{IsReducible} returns reductor which is signature-safe
correspond to reduction of some $ToDo$ element. This can be done
only finite number of times because there are finitely many polynomials
added in $ToDo$ and no polynomial can be top-reduced infinite number
of times because its $\HM$ is $\prec$-decreasing during reduction
and monomials are well-ordered with $\prec$.
\end{itemize}
\end{itemize}
We got that all of the cycles inside \texttt{AlgorithmF5} do finish
and the claim \ref{Every_cycle_iteration_finish} is proved.
\end{proof}
This gives the result about algorithm behavior for the non-terminated
case:
\begin{claim}
\label{d-does-grow}If the algorithm does not terminate for some input
then the value of $d$ infinitely grow during iterations.\end{claim}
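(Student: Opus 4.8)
The plan is to obtain this statement as a direct combination of Claims~\ref{iterations-d_grow} and~\ref{Every_cycle_iteration_finish}, reusing the minimal-counterexample reduction already introduced in the proof of Claim~\ref{iterations-d_grow}. First I would recall that if F5 fails to terminate on \emph{some} input, then it fails on a shortest such input $\left\{f_{1},\ldots,f_{m}\right\}$, and that for this input the outer cycle of \texttt{incrementalF5} has finitely many iterations, each consisting of one call to \texttt{AlgorithmF5} together with finite bookkeeping; since the calls corresponding to $f_{2},\ldots,f_{m}$ terminate, non-termination must occur inside the single call to \texttt{AlgorithmF5} that processes $f_{1}$.

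Next I would analyze the control flow of that call. The body of \texttt{AlgorithmF5} is a single \textbf{while} cycle (iterating until $P$ is exhausted), whose steps invoke only the subroutines \texttt{Spol}, \texttt{Reduction} and the \textbf{for} loop over $R_{d}$. By Claim~\ref{Every_cycle_iteration_finish} each such iteration terminates, so the distinguished call to \texttt{AlgorithmF5} can fail to return only if its \textbf{while} cycle performs infinitely many iterations.

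Finally I would invoke Claim~\ref{iterations-d_grow}: an infinite number of \textbf{while} cycle iterations inside \texttt{AlgorithmF5} forces the value of $d$ to grow without bound. Chaining the three observations yields that non-termination of F5 on any input implies unbounded growth of $d$ during the iterations of the non-terminating \texttt{AlgorithmF5} call, which is exactly the claim.

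I do not expect a genuine obstacle here, since the statement is essentially a packaging of results already proved. The only point that needs a little care is the first step — making explicit that ``the algorithm does not terminate'' can be transferred to ``one distinguished call to \texttt{AlgorithmF5} does not terminate'' via the shortest-input reduction together with the finiteness of \texttt{incrementalF5}'s outer loop — after which Claims~\ref{Every_cycle_iteration_finish} and~\ref{iterations-d_grow} close the argument immediately.
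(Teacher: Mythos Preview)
Your proposal is correct and matches the paper's own proof, which simply states that the claim follows from the combination of Claims~\ref{iterations-d_grow} and~\ref{Every_cycle_iteration_finish}. You have merely spelled out in more detail how the two claims chain together (via the shortest-input reduction and the control-flow analysis already established in those proofs), which is entirely in line with the paper's one-line argument.
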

\begin{proof}
Follows from combination of claims \ref{iterations-d_grow} and \ref{Every_cycle_iteration_finish}.
\end{proof}

\section{S-pair-chains}

The claim \ref{d-does-grow} shows that algorithm non termination
leads to existence of infinite sequence of nonzero labeled polynomials
being added to $G_{i}$ and the total degrees of polynomials in the
sequence infinitely grow. So, in this case algorithm generates an
infinite sequence of labeled polynomials $\left\{ r_{1},r_{2},\ldots,r_{m},\ldots,r_{l},\ldots\right\} $
where $r_{1},\ldots,r_{m}$ correspond to $m$ input polynomials and
other elements are generated either in \texttt{Spol} or in \texttt{TopReduction}.
In both cases new element $r_{l}$ is formed as S-polynomial of two
already existing polynomials already present in the list. We will
write $l^{*}$ and $l_{*}$ for the indexes of the polynomials used
to generate $l$-th element and $\overline{u_{l}}$, $\underline{u_{l}}$
for monomials they are multiplied. Note that $l^{*}$ correspond to
the part with greater signature: $\poly(r_{l})=\overline{u_{l}}\poly(r_{l^{*}})-\underline{u_{l}}\poly(r_{l_{*}})$
and $\Sig(r_{l})=\overline{u_{l}}\Sig(r_{l^{*}})\succ\underline{u_{l}}\Sig(r_{l_{*}})$.
The $\poly(r_{l})$ value can further change inside \texttt{TopReduction}
to the polynomial with a smaller $\HM$, but the $\Sig(r_{l})$ does
never change after creation. Now, we want to select an infinite sub-sequence
$\left\{ r_{k_{1}},r_{k_{2}},\ldots,r_{k_{n}},\ldots\right\} $ in
that sequence with the property that $r_{k_{n}}$ is an S-polynomial
generated by $r_{k_{n-1}}=r_{k_{n}^{*}}$ and some other polynomial
corresponding to smaller by signature S-pair part, so $\Sig(r_{k_{n}})=\overline{u_{k_{n}}}\Sig(r_{k_{n-1}})$
and 
\begin{equation}
\Sig(r_{k_{n-1}})|\Sig(r_{k_{n}}).\label{eq:s-pair-chain-def}
\end{equation}

\begin{defn}
Finite or infinite labeled polynomial sequence which successive elements
satisfy property \ref{eq:s-pair-chain-def} will be called \emph{S-pair-chain}.
\end{defn}
Every generated labeled polynomial $r_{l}$ has an finite S-pair-chain
ending with that polynomial. This chain can be constructed in reverse
direction going from it's last element $r_{l}$ by selecting every
step from a given polynomial $r_{n}$ a polynomial $r_{n^{*}}$ which
was used to generate $r_{n}$ as S-polynomial. The resulting S-pair-chain
has the form $\{r_{q},\ldots,r_{l^{**}},r_{l^{*}},r_{l}\}$ where
all polynomials has the same signature index $q=\sigidx(r_{l})$ and
the first element is the input polynomial of that index.

The first fact about S-pair-chains is based on the rewritten criteria
and consists in the following theorem.
\begin{thm}
Every labeled S-polynomial can participate as the first element only
in finite number of S-pair-chains of length 2.\end{thm}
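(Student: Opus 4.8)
The plan is to extract from each such chain the monomial by which $r_{a}$ is multiplied, and to show that these monomials obey a Dickson-type finiteness condition forced by the \texttt{Rewritten?} criterion. Fix the labeled S-polynomial $r_{a}$; let $i=\sigidx(r_{a})$ and let $m_{a}$ be the signature monomial of $r_{a}$. Since $r_{a}$ is an S-polynomial it was created by \texttt{Spol} or \texttt{TopReduction}, and at that moment a rule with signature monomial $m_{a}$ and index $i$ was appended to $Rule[i]$. A length-$2$ S-pair-chain with first element $r_{a}$ is a pair $\{r_{a},r_{b}\}$ in which $r_{b}$ is a labeled S-polynomial generated with $r_{a}$ as its greater S-pair part ($r_{a}=r_{b^{*}}$), so $\Sig(r_{b})=\overline{u_{b}}\,\Sig(r_{a})$ for the monomial multiplier $\overline{u_{b}}$, and I attach to the chain this monomial $\overline{u_{b}}$. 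Such an $r_{b}$ is produced only if the criteria permit it: inside \texttt{Spol} this requires \texttt{Rewritten?}$\left(\overline{u_{b}},r_{a}\right)$ to be false, and inside \texttt{TopReduction} it requires \texttt{FindReductor} not to discard $r_{a}$ by criterion (c), which again requires \texttt{Rewritten?}$\left(\overline{u_{b}},r_{a}\right)$ to be false; in either case, when $r_{b}$ is created a rule with signature monomial $\overline{u_{b}}m_{a}$ and index $i$ is appended to $Rule[i]$, and such rules are never removed.

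Next I would list the chains in the order their second elements are created, obtaining a sequence of multipliers $v_{1},v_{2},\ldots$, and prove $v_{k}\nmid v_{l}$ whenever $k<l$. Suppose instead $v_{k}\mid v_{l}$, with $r_{b}$ the $k$-th and $r_{c}$ the $l$-th second element, so $r_{b}$ is created before $r_{c}$. The rule appended for $r_{b}$, with signature monomial $v_{k}m_{a}$ and index $i$, sits in $Rule[i]$ strictly after the rule appended for $r_{a}$ (which precedes $r_{b}$, since $r_{a}=r_{b^{*}}$ must already exist). But the creation of $r_{c}$ required \texttt{Rewritten?}$\left(v_{l},r_{a}\right)$ to return false, and this call scans $Rule[i]$ backwards for the first rule whose signature monomial divides $v_{l}\cdot m_{a}$, returning false only when that first rule is $r_{a}$'s own. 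As $v_{k}m_{a}\mid v_{l}m_{a}$, the rule of $r_{b}$ is such a rule and it is more recent than $r_{a}$'s, so \texttt{Rewritten?}$\left(v_{l},r_{a}\right)$ returns true --- a contradiction. Hence $v_{k}\nmid v_{l}$ for all $k<l$; in particular the $v_{k}$ are pairwise distinct.

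Finally I would invoke Dickson's lemma: divisibility of monomials in $x_{1},\dots,x_{n}$ is a well-quasi-order, so every infinite sequence of monomials contains indices $k<l$ with $v_{k}\mid v_{l}$. Since our sequence $v_{1},v_{2},\ldots$ admits no such pair, it must be finite, so $r_{a}$ is the first element of only finitely many length-$2$ S-pair-chains, which proves the theorem.

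I expect the main obstacle to be the bookkeeping around the \texttt{Rewritten?} check rather than the Dickson step: one has to confirm that \emph{every} route by which the algorithm can produce a new labeled polynomial with $r_{a}$ as greater S-pair part --- the \texttt{Spol} branch and the signature-rejecting branch of \texttt{TopReduction} --- is guarded by exactly the test \texttt{Rewritten?}$\left(\overline{u},r_{a}\right)$, and that rules are appended in creation order and never deleted, so that the rule registered for the earlier second element is present in $Rule[i]$, with index $i$, and strictly more recent than $r_{a}$'s rule at the time the later second element is tested. Once these facts about the algorithm's control flow are pinned down, the divisibility argument and the appeal to Dickson's lemma are routine.
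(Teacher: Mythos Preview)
Your proposal is correct and follows essentially the same approach as the paper: both arguments observe that every creation of a second element $r_b$ with $r_a$ as greater S-pair part is guarded by the \texttt{Rewritten?} check and immediately registers a new rule, so the associated multipliers $v_k$ form a sequence in which no earlier term divides a later one, and Dickson's lemma forces this sequence to be finite. The paper phrases the Dickson step as ``the ideal $(v_i)$ is finitely generated'' rather than your antichain formulation, but the content is the same.
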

\begin{proof}
The \texttt{AlgorithmF5} computes S-polynomials in 2 places: in procedure
\texttt{SPol} and in the procedure \texttt{TopReduction}. It's important
that in both places the \texttt{Rewritten?} check for the part of
S-polynomial with greater signature is performed just before the S-polynomial
is constructed. In the first case the \texttt{SPol} is checking that
itself, in the \texttt{TopReduction} the check is in the \texttt{IsReducible}
procedure. And in both cases the computed S-polynomial is immediately
added to the $Rule$ list as the newest element. So, at the moment
of the construction of S-polynomial with signature $s$ we can assert
that the higher part of S-pair correspond to the newest rule with
signature dividing $s$ -- this part even may be determined by $Rule$
list and $s$ without knowing anything other about computation.

Consider arbitrary labeled polynomial $r_{L}$ with signature $\Sig(r_{L})=s$
and an ordered by generating time subset $\{r_{l_{1}},\ldots,r_{l_{i}},\ldots\}$
of labeled polynomials with signatures satisfying $\Sig(r_{l_{i}})=v_{i}\Sig(r_{L})$.
From the signature divisibility point of view all of the possibly
infinite number of pairs $\{r_{L},r_{l_{i}}\}$ can be S-pair-chains
of length 2. But the ideal $\left(v_{i}\right)$ in $T$ is finitely
generated by Dickson's lemma, so after some step $i_{0}$ we have
$\forall i>i_{0}\,\exists j\leqslant i_{0}$ such that $v_{j}|v_{i}$.
So $\forall i>i_{0}$ the sequence $\{r_{L},r_{l_{i}}\}$ is not S-pair-chain
because $\Sig(r_{L})\cdot v_{i}$ is rewritten by $\Sig(r_{l_{j}})\cdot\frac{v_{i}}{v_{j}}$
and no more than $i_{0}$ S-pair-chains of length 2 with first element
$r_{L}$ exist.\end{proof}
\begin{defn}
The finite set of ends of 2-length S-pair-chains starting with $r_{L}$
will be called \emph{S-pair-descendants} of $r_{L}$.\end{defn}
\begin{thm}
If the algorithm does not terminate for some input then there exists
infinite S-pair-chain $\{h_{i}\}$.\end{thm}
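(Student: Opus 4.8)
The plan is to organize all labeled polynomials produced by the non-terminating run into a rooted tree and to read off the required chain as an infinite branch. By Claim~\ref{d-does-grow} the value of $d$ grows without bound, so the call to \texttt{AlgorithmF5} processing $f_{1}$ -- which by the minimality assumption used in Claim~\ref{iterations-d_grow} is where non-termination occurs -- generates infinitely many distinct labeled polynomials $r_{l}$, every one of signature index $1$ by the analysis in the proof of Claim~\ref{Every_cycle_iteration_finish}. Each such $r_{l}$ carries the canonical finite S-pair-chain $\{r_{q},\dots,r_{l^{**}},r_{l^{*}},r_{l}\}$ described just after the definition of S-pair-chain, with $q=\sigidx(r_{l})=1$ and $r_{1}$ the input polynomial of index $1$. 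Declaring $r_{l^{*}}$ to be the \emph{parent} of each generated $r_{l}$ and $r_{1}$ the root turns the set of these polynomials into a rooted tree $\mathcal{T}$: there are no cycles because $r_{l^{*}}$ is strictly older than $r_{l}$, and every node is joined to $r_{1}$ by its canonical chain.

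Next I would check that $\mathcal{T}$ is finitely branching. The children of a node $r_{L}$ are exactly the generated $r_{l}$ with $l^{*}=L$; for any such $r_{l}$ we have $\Sig(r_{l})=\overline{u_{l}}\,\Sig(r_{L})$, hence $\Sig(r_{L})\mid\Sig(r_{l})$, so $\{r_{L},r_{l}\}$ is an S-pair-chain of length $2$ and $r_{l}$ is an S-pair-descendant of $r_{L}$. By the preceding theorem the set of S-pair-descendants of a labeled S-polynomial is finite, so every non-root node of $\mathcal{T}$ has finitely many children. For the root $r_{1}$ the identical Dickson's-lemma-and-rewriting argument applies: were there infinitely many generated polynomials with $r_{1}$ as higher S-pair part, Dickson's lemma applied to their multipliers $\overline{u}$ in $T$ would give an earlier such polynomial whose signature divides the signature of a later one, and that earlier polynomial's rule -- newer than the rule of $r_{1}$ -- would already have rewritten the later signature, so $r_{1}$ could not have been chosen as the higher part there. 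Hence $\mathcal{T}$ is finitely branching.

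Since $\mathcal{T}$ has infinitely many nodes and is finitely branching, the standard tree lemma (K\"onig's lemma) yields an infinite branch $r_{1}=h_{1},h_{2},h_{3},\dots$ in $\mathcal{T}$. Consecutive elements satisfy $h_{i}=h_{i+1}^{*}$, hence $\Sig(h_{i})\mid\Sig(h_{i+1})$, which is exactly property~\ref{eq:s-pair-chain-def}; therefore $\{h_{i}\}$ is an infinite S-pair-chain, which proves the theorem.

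The one delicate point I anticipate is precisely this root case of the finite-branching argument: the preceding theorem is stated for labeled S-polynomials, whereas $r_{1}$ is an input polynomial, so one must either re-run its Dickson's-lemma argument directly for $r_{1}$ (the argument never really uses that $r_{L}$ is an S-polynomial, only that it has an associated rule), or argue separately that $r_{1}$ can serve as higher S-pair part only for the finitely many initial critical pairs formed against the finite basis $G_{2}$ and for finitely many \texttt{TopReduction} reductor applications (again bounded via Dickson). Everything else -- in particular tracking which part of each S-pair is the higher one, and hence which parent a generated polynomial receives -- is routine bookkeeping already carried out in the text preceding the statement.
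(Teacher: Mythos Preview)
Your proposal is correct and is essentially the paper's own argument: the paper introduces \emph{chain generators} (nodes with infinitely many finite S-pair-chains below them), shows $r_{1}$ is one, and then pigeonholes down through the finite set of S-pair-descendants---precisely the standard proof of K\"onig's lemma, which you invoke by name. Your explicit treatment of the root case is in fact more careful than the paper's, which silently applies the preceding theorem to $r_{1}$ even though its statement says ``labeled S-polynomial''; as you note, the Dickson/rewriting argument in that theorem's proof works verbatim for $r_{1}$.
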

\begin{proof}
Some caution is required while dealing with infinities, so we give
the following definition. 
\begin{defn}
The labeled polynomial $r_{l}$ is called \emph{chain generator} if
there exist infinite number of different finite S-pair-chains starting
with $r_{l}$.
\end{defn}
If the algorithm does not terminate the input labeled polynomial $r_{1}=(f_{1},1F_{1})$
is chain generator because every labeled polynomial $r_{l}$ generated
in the last non-terminating call to \texttt{AlgorithmF5} has signature
index 1 so there is an S-pair-chain $\{r_{1},\ldots,r_{l^{**}},r_{l^{*}},r_{l}\}$.

Now assume that some labeled polynomial $r_{l}$ is known to be a
chain generator. Then one of the finite number of S-pair-descendants
of $r_{l}$ need to be a chain generator too, because in the other
case the number of different chains of length greater than 2 coming
from $r_{l}$ was limited by a finite sum of the finite counts of
chains coming from every S-pair-descendant, and the finite number
of length 2 chains coming from $r_{l}$. So, if labeled polynomial
$r_{l}$ is chain-generator, we can select another chain generator
from it's S-pair-descendants. In a such way we can find infinite
S-pair-chain starting with $r_{1}$ and consisting of chain generators
which proves the theorem.
\end{proof}
For the next theorem we need to introduce monomial quotients order
by transitively extending the monomial ordering: $\frac{m_{1}}{m_{2}}>_{q}\frac{m_{3}}{m_{4}}\Leftrightarrow m_{1}m_{4}>m_{3}m_{2}$.
\begin{thm}
\label{thm:f_g_3_props}If the algorithm does not terminate for some
input then after some finite step the set $G$ contains a pair of
labeled polynomials $f',f$ with $f$ generated after $f'$ that satisfies
the following 3 properties:

\[
\HM(f')|\HM(f),
\]

\[
\frac{\HM(f')}{\Sig(f')}>_{q}\frac{\HM(f)}{\Sig(f)},
\]

\[
\Sig(f')|\Sig(f).
\]
\end{thm}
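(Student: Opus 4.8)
The plan is to pull the pair $f',f$ directly out of the infinite S-pair-chain $\{h_i\}$ furnished by the previous theorem, by combining two monotonicity facts that hold step-by-step along the chain with Dickson's lemma applied to head monomials.

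First I would record what the chain gives for free. By the definition of S-pair-chain, $\Sig(h_{i-1})\mid\Sig(h_i)$ for every $i$, and this is witnessed by the multiplier with $\Sig(h_i)=\overline{u_i}\Sig(h_{i-1})$; by transitivity $\Sig(h_a)\mid\Sig(h_b)$ whenever $a<b$, so the third required property holds for any two chain elements with increasing indices. Since each $h_i$ is constructed as an S-polynomial with $h_{i-1}$ as its greater-signature part, $h_i$ is generated after $h_{i-1}$ and hence after every $h_a$ with $a<i$; and since all chain generators are nonzero labeled polynomials that survive into the basis, every $h_i$ lies in $G$ (with signature index $1$) after finitely many steps.

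Next I would establish the key monotonicity: $\frac{\HM(h_{i-1})}{\Sig(h_{i-1})}>_q\frac{\HM(h_i)}{\Sig(h_i)}$ for every $i$. The point is that $h_i$ is an S-polynomial, so at the moment of its creation $\poly(h_i)=\overline{u_i}\poly(h_{i-1})-\underline{u_i}\poly(r_{k_i^{(*)}})$ with the two leading terms cancelling, whence $\HM(h_i)\prec\overline{u_i}\HM(h_{i-1})$ immediately after creation, and any subsequent \texttt{TopReduction} steps can only decrease $\HM(h_i)$ further. Here one needs that the value $\HM(h_{i-1})$ used when $h_i$ is built is already its final one: this holds because the greater-signature part of any S-polynomial computed by the algorithm is always taken from an already-finalized polynomial --- in \texttt{Spol} it is one of the critical-pair generators from $G_i$, and in \texttt{TopReduction} it is the reductor returned by \texttt{FindReductor} from $G\cup Done$ --- and such polynomials are never reduced afterwards, so their head monomial is fixed. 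Dividing $\HM(h_i)\prec\overline{u_i}\HM(h_{i-1})$ through by $\Sig(h_i)=\overline{u_i}\Sig(h_{i-1})$ and unfolding the definition of $>_q$ gives the claimed strict inequality, and by transitivity $\frac{\HM(h_a)}{\Sig(h_a)}>_q\frac{\HM(h_b)}{\Sig(h_b)}$ whenever $a<b$, which is the second required property for any two chain elements with increasing indices.

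Finally I would apply Dickson's lemma to the infinite sequence of head monomials $\HM(h_1),\HM(h_2),\ldots$: the monomial ideal they generate in $T$ is finitely generated, so there are indices $a<b$ with $\HM(h_a)\mid\HM(h_b)$ (this is the same divisibility argument used in the length-$2$-chains theorem). Putting $f'=h_a$ and $f=h_b$ then satisfies all three properties by the two paragraphs above, $f$ is generated after $f'$, and both belong to $G$ after the finite step at which $h_b$ enters $G$. I expect the only delicate point to be the claim invoked in the monotonicity step --- that the greater-signature part of an S-polynomial has its head monomial already finalized at construction time --- so I would argue carefully that critical-pair generators and reductors are never themselves top-reduced after being used; the remaining ingredients (transitivity of divisibility, well-ordering of $\prec$, and Dickson's lemma) are routine.
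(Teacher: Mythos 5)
Your proposal is correct and follows essentially the same route as the paper: extract the pair from the infinite S-pair-chain using (i) signature divisibility along the chain, (ii) the strict $>_q$-descent of $\HM/\Sig$ coming from S-polynomial head cancellation together with the observation that higher S-pair parts live in $G\cup Done$ and are never reduced afterwards, and (iii) Dickson's lemma on the head monomials. The only difference is cosmetic: the paper inserts an extra analysis of the chain's head monomials (blocks of fixed total degree with strictly decreasing $\HM$ inside each block) to force the Dickson divisibility to go from an earlier to a later chain element, whereas you obtain the ordering $a<b$ directly from the standard stabilizing-chain form of Dickson's lemma, which is equally valid.
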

\begin{proof}
For working with S-pair-chains it is important that the polynomial
can never reduce after it was used for S-pair generation as higher
S-pair part. That's true because all polynomials that potentially
can reduce are stored in set $ToDo$, but all polynomials that are
used as higher S-pair part are stored in $G$ or in $Done$. So we
may state that the polynomial $h_{n}$ preceding polynomial $h_{n+1}$
in the S-pair-chain keeps the same $\poly(h_{n})$ value after it
was used for some S-pair generation and we can state that
\[
\poly(h_{n+1})=c\frac{\Sig(h_{n+1})}{\Sig(h_{n})}\poly(h_{n})+g_{n},
\]
where $g_{n}$ is the polynomial corresponding to smaller part of
S-pair used to generate $h_{n+1}$ from $h_{n}$ and satisfy:
\begin{equation}
\HM(h_{n+1})<\HM\left(\frac{\Sig(h_{n+1})}{\Sig(h_{n})}h_{n}\right)=\HM(g_{n}),\,\Sig(h_{n+1})=\Sig\left(\frac{\Sig(h_{n+1})}{\Sig(h_{n})}h_{n}\right)\succ\Sig(g_{n}).\label{eq:spair-chain}
\end{equation}
From the first inequality in \ref{eq:spair-chain} we can get $\frac{\HM(h_{n})}{\Sig(h_{n})}>_{q}\frac{\HM(h_{n+1})}{\Sig(h_{n+1})}$,
so in the S-pair-chain the quotients $\frac{\HM(h_{i})}{\Sig(h_{i})}$
are strictly descending according to quotients ordering. This fact
can't be used directly to show chains finiteness because unlike the
ordering of monomials the ordering of monomial quotients is not well
ordering -- for example the sequence $\frac{x}{x}>_{q}\frac{x}{x^{2}}>_{q}\cdots>_{q}\frac{x}{x^{n}}>_{q}\cdots$
is infinitely decreasing.

There is two possible cases for relation between $\HM$'s of consecutive
elements. We have $\Sig(h_{n})|\Sig(h_{n+1})$, so they either have
equal signatures or $\totaldeg(h_{n})<\totaldeg(h_{n+1})$. For the
first case $\HM(h_{n+1})<\HM(h_{n})$ with the equal total degrees
for the other case $\HM(h_{n+1})>\HM(h_{n})$ because total degrees
of $\HM$'s are different. So the sequence of infinite S-pair-chain
$\HM$'s consists of blocks with fixed total degrees where $\HM$'s
inside a block are strictly decreasing. Block lengths can be equal
to one and the total degree of blocks are increasing. This leads to
the following properties: S-pair-chain $\{h_{i}\}$ can't contain
elements with equal $\HM$'s and $\HM(h_{i})|\HM(h_{j})$ is possible
only for $i<j$ and $\totaldeg(h_{i})<\totaldeg(h_{j})$.

This allows us to use technique analogous to Proposition 14 from \cite{TheF5Revised}:
consider $\HM$'s of infinite S-pair-chain $\{h_{i}\}$. They form
an infinite sequence in $T$, so by Dickson's lemma there exists
2 polynomials in sequence with $\HM(h_{i})|\HM(h_{j})$. Therefore
from the previous paragraph we have $i<j$ and with the S-pair-chain
properties we get $\Sig(h_{i})|\Sig(h_{i+1})|\cdots|\Sig(h_{j})$
and $\frac{\HM(h_{i})}{\Sig(h_{i})}>_{q}\frac{\HM(h_{i+1})}{\Sig(h_{i+1})}>_{q}\cdots>_{q}\frac{\HM(h_{j})}{\Sig(h_{j})}$,
so we take $f'=h_{i}$ and $f=h_{j}$.
\end{proof}
The last property about signature division from the theorem claim
is the consequence of dealing with S-pair-chains and is not used in
the following. But the first two properties are used to construct
a signature-safe reductor.
\begin{fact}
If no polynomials are rejected by criteria checks (b) and (c) inside
\texttt{IsReducible} the algorithm does terminate.\end{fact}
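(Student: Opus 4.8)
The plan is a proof by contradiction resting squarely on Theorem~\ref{thm:f_g_3_props}. Assume the algorithm does not terminate even though no polynomial is ever discarded by criterion (b) or (c) inside \texttt{IsReducible}. Theorem~\ref{thm:f_g_3_props} then furnishes, after finitely many steps, a pair $f',f\in G$ with $f$ generated after $f'$, $\HM(f')\mid\HM(f)$, $\frac{\HM(f')}{\Sig(f')}>_q\frac{\HM(f)}{\Sig(f)}$, and $\Sig(f')\mid\Sig(f)$. Put $t=\HM(f)/\HM(f')$, which is a genuine monomial by the first property, so that $t\cdot f'$ is a candidate top-reductor of $f$ and already satisfies the divisibility requirement of criterion (a).

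The one substantive computation is that this reductor is \emph{signature-safe}. Since $\Sig(f')\mid\Sig(f)$ the two signatures share an index, so comparing signatures is comparing their monomial parts with the term order; clearing the denominator $\HM(f')$ turns the inequality $\Sig(f)\succ t\,\Sig(f')$ required for signature-safety into $\HM(f')\cdot\Sig(f)>\HM(f)\cdot\Sig(f')$, which is exactly the second property unwound through the definition of $>_q$. As this inequality is \emph{strict}, the reductor $t\cdot f'$ also escapes criterion (d), which fires only on equal signatures; and it escapes (b) and (c) by the standing hypothesis. Hence, as a reductor of $f$, $t\cdot f'$ passes every check in \texttt{IsReducible}.

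It remains to see that the algorithm is actually in a position to use it. From $\HM(f')\mid\HM(f)$ one has $\totaldeg(f')<\totaldeg(f)$ --- the observation from the proof of Theorem~\ref{thm:f_g_3_props} that an S-pair-chain never carries two elements with one head monomial dividing the other at equal total degree --- so $f'$ is produced in a strictly earlier \texttt{AlgorithmF5} iteration than $f$, the iteration degree $d$ being non-decreasing; consequently $f'$ has already been installed in the running basis, with its final polynomial frozen (a polynomial is immutable after being used as a higher S-pair part, and $f'$ was so used before $f$ was created), by the time \texttt{Reduction} treats $f$. In particular $f'\in G\cup Done$ at every call \texttt{FindReductor} makes while reducing $f$. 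But a labeled polynomial enters $Done$, and thence $G$, only through the branch of \texttt{TopReduction} where \texttt{FindReductor} returns the empty reductor; so the call that installed $f$ in $G$ with head monomial $\HM(f)$ asserted that $f$ has no admissible reductor, contradicting the existence of $t\cdot f'$. Thus the algorithm cannot run forever.

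I expect the main obstacle to be precisely the third paragraph: turning the abstract pair supplied by Theorem~\ref{thm:f_g_3_props} into a concrete moment of execution at which \texttt{FindReductor} is consulted with $f'$ in the reductor set and with both $f'$ and $f$ already in their final, immutable forms. This leans on two facts borrowed from the proof of Theorem~\ref{thm:f_g_3_props} --- strict total-degree growth along an S-pair-chain whenever head-monomial divisibility holds, and the freezing of a polynomial once it serves as a higher S-pair part --- together with the elementary remark that, because $d$ never decreases across iterations, every lower-degree polynomial is completed and placed in $G$ before any higher-degree polynomial is reduced. Granting those, the signature-safety computation and the observation that criterion (d) is blocked by the strictness of the second property are routine.
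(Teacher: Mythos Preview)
Your argument is correct and follows essentially the same line as the paper's: invoke Theorem~\ref{thm:f_g_3_props}, convert the quotient inequality into the signature inequality $\Sig(f)\succ t\,\Sig(f')$ (using that term order and signature order agree), observe that this dispatches checks (a) and (d) while (b) and (c) are vacuous by hypothesis, and conclude that $f$ should have been reduced by $f'$ in \texttt{TopReduction}. Your third paragraph, justifying that $f'$ is actually present in $G\cup Done$ with its final polynomial at the moment $f$ is top-reduced, is more careful than the paper, which simply asserts ``at the time $g$ was added to the set $G$ labeled polynomial $f$ already had been there''; your degree argument is a valid way to secure this, though the paper implicitly relies on the S-pair-chain property that $f'=h_i$ served as higher S-pair part (hence was already in $G\cup Done$) before $f=h_j$ was created.
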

\begin{proof}
The above proof of theorem \ref{thm:f_g_3_props} does not rely on
any correspondence between orderings on signatures and terms. But
the original F5 algorithm uses the same ordering for both cases and
now we utilize this fact and make a transition from one to other to
get relation on signatures for polynomials from theorem \ref{thm:f_g_3_props}
claim: 
\[
\Sig(g)\succ t\cdot\Sig(f),\mbox{ where }t=\frac{\HM(g)}{\HM(f)}\in T.
\]
The last inequality with $\HM$'s division property from theorems
result shows that $tf$ can be used as a reductor for $g$ in \texttt{TopReduction}
from the signature point of view -- i.e. it satisfy checks (a) and
(d) inside \texttt{IsReducible} and it's signature is smaller. In
the absence of criteria checks (b) and (c) this would directly lead
to contradiction because at the time $g$ was added to the set $G$
labeled polynomial $f$ already had been there, so the \texttt{TopReduction}
should had reduced $g$ by $f$.
\end{proof}
But the existence of criteria allow the situation in which $tf$ is
rejected by criteria checks in (b) or (c) inside \texttt{IsReducible}.
The idea is to show that even in this case there can be found another
possible reductor for $g$ that is not rejected and anyway lead to
contradiction and the following parts of paper aim to prove it.

\section{S-pairs with signatures smaller than $\Sig(g)$}

In this and following sections $g$ is treated as some fixed labeled
polynomial with signature index 1 added to $Done$ in some algorithm
iteration. Let us work with algorithm state just before adding $g$
to $Done$ during call to \texttt{AlgorithmF5} with $i=1$. Consider
a finite set $G_{1}\cup Done$ of labeled polynomials at that moment.
This set contains positions of labeled polynomials in $R$, so it's
elements can be ordered according to position in $R$ and written
as an ordered integer sequence $G_{g}=\{b_{1},\ldots,b_{N}\}$ with
$b_{j}<b_{j+1}$. It should be noted that this order does correspond
to the order of labeled polynomials in the sequence produced by concatenated
rule arrays $Rule[m]:Rule[m-1]:\cdots:Rule[1]$ because addition of
new polynomial to $R$ is always followed by addition of corresponding
rule. But this order may differ from the order polynomials were added
to $G_{1}\cup Done$ because polynomials with same total degree are
added to $Done$ in the increasing signature order, while the addition
polynomials with same total degree to $R$ is performed in quite random
order inside \texttt{Spol} and \texttt{TopReduction} procedures. For
the simplicity we will be speaking about labeled polynomials $b_{j}$
from $G_{g}$, assuming that $G_{g}$ is not the ordered positions
list but the ordered list of labeled polynomials themselves corresponding
to those positions. In this terminology we can say that all input
polynomials $\left\{ f_{1},\ldots,f_{m}\right\} $ do present in $G_{g}$,
because they all present in $G_{1}$ at the moment of its creation.

S-pairs can be processed in a different ways inside the algorithm
but the main fact we need to know about their processing is encapsulated
in the following properties which correspond to the properties used
in Theorem 21 in \cite{F5C} but are taken during arbitrary algorithm
iteration without requirements of termination.
\begin{thm}
\label{thm:Exist-gg-repr}At the moment of adding $g$ to $Done$
every S-pair of $G_{g}$ elements which signature is smaller than
$\Sig(g)$ satisfies one of three properties:\end{thm}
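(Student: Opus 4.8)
The plan is to model the proof on Theorem 21 of \cite{F5C}, tracking how each S-pair of $G_g$ elements is handled by the algorithm before $g$ enters $Done$. Given an S-pair $[t,u_1,b_1,u_2,b_2]$ with $\Sig(tu_1b_1)\prec\Sig(g)$, since $g$ has signature index $1$ and all polynomials in $G_g$ of that total degree were processed in increasing signature order, this S-pair — if it had index $1$ and total degree $\leqslant\totaldeg(g)$ — was already extracted from $P$ (or was eliminated before ever being placed there) at some earlier point in the computation. The first step is therefore to enumerate the fates such an S-pair can meet: it may have been rejected by \texttt{CritPair} because $\varphi$ did not fix one of the signatures (the \texttt{NormalForm}/index criterion, corresponding to criterion (b)); it may have been rejected by \texttt{Rewritten?} either at creation time in \texttt{CritPair} or later in \texttt{Spol} (criterion (c)); or it survived to \texttt{Spol}, produced an S-polynomial which was then reduced — possibly to zero, possibly to a new element of $G$ — by \texttt{Reduction}. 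I expect the three alternatives in the theorem statement to be exactly these: (i) the larger signature $u_1\Sig(b_1)$ is not normalized, i.e.\ is top-reducible by some element of $G_{i+1}$; (ii) the S-pair is rewritable, i.e.\ there is a later rule in $Rule$ with signature dividing $u_1\Sig(b_1)$; (iii) the S-polynomial $t u_1 \poly(b_1) - t u_2\poly(b_2)$ admits a representation $\sum c_k w_k \poly(b_{j_k})$ with each $w_k\Sig(b_{j_k})\preceq u_1\Sig(b_1)$ and $\HM$ of each summand $\leqslant \HM$ of the S-polynomial — the usual ``$t$-representation bounded by the S-pair signature'' conclusion, with the bound possibly strict when reduction to zero or a genuine new element occurred.

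The key steps, in order, would be: first, fix the algorithm state just before $g$ is appended to $Done$ and observe that the \textbf{while} loop in \texttt{AlgorithmF5} has already handled every critical pair of total degree $<\totaldeg(g)$ completely, and within total degree $\totaldeg(g)$ has handled every S-pair whose larger signature is $\prec\Sig(g)$, because \texttt{Reduction} processes $ToDo$ in increasing signature order. Second, case-split on whether the pair was ever inserted into $P$ by \texttt{CritPair}: if \texttt{CritPair} returned nothing, it was because $\varphi(u_1\Sig(b_1))\neq u_1\Sig(b_1)$ or $\varphi(u_2\Sig(b_2))\neq u_2\Sig(b_2)$, and in the latter (smaller-signature) subcase one shows — using $\Sig(b_2)$ having index $1$ and the bookkeeping of \cite{F5C} — that this still yields alternative (i) or (ii) for the pair as stated. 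Third, if the pair entered $P$ and was later pulled by \texttt{Spol}, the \texttt{Rewritten?} check inside \texttt{Spol} either fires (giving (ii)) or the S-polynomial is formed and fed to \texttt{Reduction}, which by Claim~\ref{Every_cycle_iteration_finish} terminates; unwinding the signature-safe reductions performed there expresses the S-polynomial as a combination of $G_g$-elements with the signature and $\HM$ bounds of alternative (iii). Fourth, handle the possibility that $\poly(b_k)$ changed (was further top-reduced) after the pair was created by noting, as in the proof of Theorem~\ref{thm:f_g_3_props}, that the larger part stored in $G\cup Done$ is frozen once used, so the representation is in terms of the current (final) $\poly$ values.

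The main obstacle I anticipate is the same one that makes \cite[Thm.~21]{F5C} delicate: carefully controlling the signatures and head monomials of the terms produced when \texttt{Reduction} rewrites the S-polynomial, so that the resulting representation genuinely respects both the signature bound $\preceq u_1\Sig(b_1)$ and the head-monomial bound, including the borderline cases where a reductor has equal total degree (multiplier $1$) or where reduction drops the S-polynomial to zero. A secondary subtlety is that $G_g$ is ordered by position in $R$ rather than by generation time or signature, so when invoking rewrite-rule arguments one must check that ``newest rule with dividing signature'' and ``latest in $R$'' coincide in the way needed — the paragraph preceding the theorem already sets this up, and I would lean on it. Apart from these, the argument is a fairly mechanical unwinding of what \texttt{CritPair}, \texttt{Spol}, \texttt{Reduction}, and \texttt{TopReduction} do to an S-pair, done at a mid-computation snapshot rather than at termination.
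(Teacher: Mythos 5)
Your overall strategy --- trace each S-pair through the algorithm's handling of it and use the increasing-signature processing order of $P$ and $ToDo$ to argue that handling is complete before $g$ enters $Done$ --- is the same as the paper's, and your reconstruction of the three alternatives is essentially right. But there is a genuine gap in the case analysis. You split on ``whether the pair was ever inserted into $P$ by \texttt{CritPair}'', treating \texttt{CritPair} and \texttt{Spol} as the only rejection points and \texttt{Reduction} as the only computation point. That covers only S-pairs whose generators both already lie in $G_{1}$ when the current degree step begins. An S-pair with a generator in $Done$ (a polynomial of total degree $\totaldeg(g)$ created during the current call to \texttt{Reduction}) has \emph{never} been passed to \texttt{CritPair} at the studied moment, because \texttt{CritPair} is invoked on $R_{d}$ elements only after \texttt{Reduction} returns. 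Such pairs are not marginal: by homogeneity, any such pair with signature $\prec\Sig(g)$ must have lcm of total degree exactly $\totaldeg(g)$, which forces one generator's $\HM$ to divide the other's --- these are precisely the ``forbidden reduction'' pairs. Neither branch of your case split applies to them, so your argument yields no conclusion for exactly the pairs that the later replacement lemmas most depend on.

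The paper closes this with a second processing path: for a pair with $\HM(r_{l^{*}})\mid\HM(r_{l_{*}})$, the criteria checks happen not in \texttt{CritPair}/\texttt{Spol} but in \texttt{IsReducible} when the generator with the larger head monomial is being top-reduced (check (b) playing the role of $\varphi$, checks (c) and (d) the role of \texttt{Rewritten?}); if the multiplied reductor survives, the pair is either genuinely signature-safe reduced (contributing to property 3) or, when the reductor is not signature-safe, \texttt{TopReduction} creates the S-polynomial and places it in $ToDo$, after which the increasing-signature order guarantees it is finished before $g$. You would need to add this path, and to verify that it is actually triggered before the studied moment (the paper does this by noting that the relevant generator is pulled from $ToDo$ before $g$ because its signature is smaller). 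The remainder of your outline --- completeness via signature order, the freezing of $\poly$ values once a polynomial leaves $ToDo$, the head-monomial bookkeeping inside \texttt{Reduction} --- matches the paper and is sound.
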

\begin{enumerate}
\item S-pair has a part that is rejected by the normal form check $\varphi$
(in \texttt{CritPair} or in \texttt{IsReducible}). Such S-pairs will
be referenced as \emph{S-pairs with a part that satisfies F5 criterion}. 
\item S-pair has a part that is rejected by the \texttt{Rewritten?} check
(in \texttt{SPol} or in \texttt{IsReducible}). Such S-pairs will be
referenced as \emph{S-pairs with a part that satisfies Rewritten criterion}.
\item S-pair was not rejected, so it's S-polynomial was signature-safe
reduced by $G_{g}$ elements and the result is stored in $G_{g}$.
Such S-pairs will be referenced as \emph{S-pairs with a computed $G_{g}$-representation}.\end{enumerate}
\begin{proof}
S-pairs of $G_{g}$ elements are processed in two paths in the algorithm.
The main path is for S-pairs with total degree greater than total
degrees of polynomials generated it. Such S-pairs are processed in
the following order:
\begin{itemize}
\item in the \texttt{AlgorithmF5} they are passed in \texttt{CritPair} function
while moving elements to $G_{i}$ from $R_{d}=Done$ or while processing
input polynomial $r_{i}$. 
\item The \texttt{CritPair} function either discards them by normal form
check $\varphi$ or adds to $P$
\item The S-pair is taken from $P$ and passed to \texttt{SPol} function
\item The \texttt{SPol} function either discards them by \texttt{Rewritten?}
check or adds S-polynomial to $F=ToDo$
\item At some iteration the \texttt{Reduction} procedure takes S-polynomial
from $ToDo$, performs some signature-safe reductions and adds result
to $Done$.
\end{itemize}
The other processing path is for special S-pairs corresponding to
reductions forbidden by algorithm -- the case when S-pair is generated
by polynomials $r_{l^{*}}$ and $r_{l_{*}}$ such that $\HM(r_{l^{*}})|\HM(r_{l_{*}})$
so that S-polynomial has a form $\overline{u_{l}}\cdot\poly(r_{l^{*}})-1\cdot\poly(r_{l_{*}})$.
Such situation is possible for two $G_{g}$ elements if reduction
of $r_{l_{*}}$ by $r_{l^{*}}$ was forbidden by signature comparison
in \texttt{TopReduction} or by checks in \texttt{IsReducible}. So
for this case the path of S-pair ``processing'' is the following:
\begin{itemize}
\item The S-pair part $\overline{u_{l}}\cdot r_{l^{*}}$ is checked in \texttt{IsReducible}.
(a) is satisfied because $\HM(r_{l^{*}})|\HM(r_{l_{*}})$. It can
be rejected by one of the other checks:

\begin{itemize}
\item Rejection by check (b) correspond to the normal form check $\varphi$
for $\overline{u_{l}}\cdot r_{l^{*}}$
\item Rejection by check (c) correspond to the \texttt{Rewritten?} check
for $\overline{u_{l}}\cdot r_{l^{*}}$
\item Rejection by check (d) means that either $\overline{u_{l}}\cdot r_{l^{*}}$
or $1\cdot r_{l_{*}}$ can be rewritten by other, so if S-pair was
not rejected by check (c) this type of rejection means that S-pair
part $1\cdot r_{i_{1}}$ fails to pass \texttt{Rewritten?} check.
\end{itemize}
\item The non-rejected in \texttt{IsReducible} S-pair is returned in the
\texttt{TopReduction}. Signature comparison in \texttt{TopReduction}
forbids the reduction of $r_{l_{*}}$ by $r_{l^{*}}$ and returns
computed S-polynomial corresponding to the S-pair in the set $ToDo_{1}$
\item The \texttt{Reduction} procedure add this polynomial in $ToDo$
\item The last step is equal to for both processing paths: at some iteration
the \texttt{Reduction} procedure takes S-polynomial from $ToDo$,
performs some signature-safe reductions and adds result to $Done$
\end{itemize}
It can be seen that after S-pair processing termination every S-pair
is either reduced and added to $Done$ or one of its S-pair parts
is rejected by normal form $\varphi$ or \texttt{Rewritten?} check.
Some S-pairs can be processed by processing paths multiple times,
for example this is done in the second iteration inside \texttt{AlgorithmF5}
with same $d$ value. If the pair was rejected during first processing
it will be rejected the same way during next attempt. If the first
time processing adds polynomial to $Done$ the pair will be rejected
in next attempts by \texttt{the Rewritten?} check with that polynomial.
So all but the first processing attempts are insignificant.

The processing path is not a single procedure and for the case of
algorithm infinite cycling some S-pairs are always staying in the
middle of the path having S-pair queued in $P$ or S-polynomial in
$ToDo$. So we have to select S-pairs which processing is already
finished at the fixed moment we studying. The elements from $P$ and
$ToDo$ in \texttt{AlgorithmF5} and \texttt{Reduction} procedures
are taken in the order corresponding to growth of their signatures.
So S-pairs with signature smaller than $\Sig(g)$ can be split in
the following classes:
\begin{itemize}
\item S-pairs with signature $w$ such that $\sigidx(w)>\sigidx(\Sig(g))=1$.
They were processed on previous calls of \texttt{AlgorithmF5}.
\item S-pairs with signature $w$ such that $\sigidx(w)=\sigidx(\Sig(g))=1,\totaldeg(w)<\totaldeg(\Sig(g))$.
They were processed on previous iterations inside the call to \texttt{AlgorithmF5}
that is processing $g$.
\item S-pairs with signature $w$ such that $\sigidx(w)=\sigidx(\Sig(g))=1,\totaldeg(w)=\totaldeg(\Sig(g)),w\prec\Sig(g)$.
They were processed on previous iterations inside the call to Reduce
that is processing $g$.
\end{itemize}
S-pairs from these classes can't be at the middle of processing path
because at the studied state of algorithm the processing is just finished
for $g$ so $P$ and $ToDo$ sets does not contain any non-processed
elements with signatures smaller $\Sig(g)$. The only left thing to
show is proof that processing was started at least one time for all
S-pairs from theorem claim. This is true for first two classes: the
processing of corresponding S-pairs was started at least one time
with the call to \texttt{CritPair} inside \texttt{AlgorithmF5} just
before the greatest of S-pair generators was added to $G$. For S-pairs
of third class the situation depend on the total degrees of its generators.
If both generators of S-pair have total degrees $<\totaldeg(g)$ then
its processing is started in \texttt{CritPair} like for the S-pairs
from first two classes. But some S-pairs from the third class can
have a signature-greater generator polynomial $r_{l}$ such that $\totaldeg(r_{l})=\totaldeg(g),\,\Sig(r_{l})\prec\Sig(g)$.
They are processed with the second mentioned processing path so the
processing for such S-pairs is not yet started at the beginning of
last \texttt{Reduction} call. Fortunately, their processing starts
inside \texttt{Reduction} before fixed moment we studying: the procedure
selects polynomials from $ToDo$ in the signature increasing order,
so $r_{l}$ is reduced before $g$ and during $r_{l}$ reduction just
before putting $r_{l}$ to $Done$ a call to \texttt{IsReducible}
starts processing for all such S-pairs.
\end{proof}
The ideas of \emph{satisfying F5 criterion }and \emph{satisfying Rewritten
criterion} can be extended to arbitrary monomial-multiplied labeled
polynomial $sh,\, h\in G_{g}$:
\begin{defn}
The monomial-multiplied labeled polynomial $sr_{i},\, r_{i}\in G_{g}$
is called \emph{satisfying F5 criterion} if $\varphi_{index(r_{i})+1}(s\Sig(r_{i}))\ne s\Sig(r_{i})$,
where $\varphi_{index(r_{i})+1}$ is operator of normal form w.r.t
$G_{index(r_{i})+1}$.
\end{defn}
This definition is equivalent to $sr_{i}$ being non-normalized labeled
polynomial according to definition 2 in part 5 of \cite{FaugereF5}.
\begin{defn}
The monomial-multiplied labeled polynomial $sr_{i},\, r_{i}\in G_{g}$
is called \emph{satisfying Rewritten criterion} if $\exists j>i$
such that $\Sig(r_{j})|s\Sig(r_{i})$.
\end{defn}
For the case $sr_{i}$ is the S-pair part these definitions are equivalent
to the checks in the algorithm in a sense that S-pair part is rejected
by the algorithm if and only if it satisfies the definition as monomial-multiplied
labeled polynomial. Note that for both criteria holds important property
that if $sr_{i}$ satisfies a criteria then a further multiplied $s_{1}sr_{i}$
$ $satisfies it too.

\section{Representations}

\subsection{Definition}

The idea of representations comes from \cite{F5C}, where a similar
method is used in the proof of Theorem 21. Representations are used
to describe all possible ways how a labeled polynomial $p$ can be
written as an element of $\left(G_{g}\right)$ ideal. The single representation
corresponds to writing a labeled polynomial $p$ as any finite sum
of the form
\begin{equation}
p=\sum_{k}m_{k}\cdot b_{i_{k}},\; b_{i_{k}}\in G_{g}\label{eq:Gg-repr-def}
\end{equation}
with coefficients $m_{k}=c_{k}t_{k}\in\mathcal{K}\times T$. 
\begin{defn}
Sum of the form \ref{eq:Gg-repr-def} with all pairs $\left(t_{k},b_{i_{k}}\right)$
distinct is called \emph{$G_{g}$-representation} of $p$. The symbolic
products $m_{k}\cdot b_{i_{k}}$ are called the \emph{elements} of
representation. If we treat this symbolic product as multiplication
we get an labeled polynomial $m_{k}b_{i_{k}}$ corresponding to the
representation element. So $p$ is equal to sum of labeled polynomials,
corresponding to elements of its representation. Also the term \emph{element
signature} will be used for signature of labeled polynomials corresponding
to the element. Two representations are equal if the sets of their
elements are equal.
\end{defn}
Most representations we are interested in have the following additional
property limiting elements signature:
\begin{defn}
The $G_{g}$-representation of $p$ is called \emph{signature-safe}
if $\forall k\,\Sig(m_{k}b_{i_{k}})\preccurlyeq\Sig(p)$.
\end{defn}

\subsection{Examples}
\begin{example}
The first important example of a $G_{g}$-representation is trivial:
the labeled polynomial from $G_{g}$ is equal to sum of one element,
identity-multiplied itself: 
\[
b_{j}=1\cdot b_{j}.
\]

\end{example}
This $G_{g}$-representation is signature-safe. The prohibition of
two elements which have same monomial $t_{k}$ and polynomial $b_{i_{k}}$
ensures that all elements of representation that differ only in field
coefficient $c_{k}$ are combined together by summing field coefficients.
So expressions like $b_{j}=-1\cdot b_{j}+2\cdot b_{j}$ and $b_{j}=2x\cdot b_{k}+1\cdot b_{j}-2x\cdot b_{k}$
are not valid $G_{g}$-representations.
\begin{example}
A labeled polynomial $b_{j}\in G_{g}$ multiplied by arbitrary polynomial
$h$ also have a simple $G_{g}$-representation arising from splitting
$h$ into terms: $h=\sum_{k}m_{k},\, m_{k}\in\mathcal{K}\times T$.
This $G_{g}$-representation has form 
\begin{equation}
b_{j}h=\sum_{k}m_{k}\cdot b_{j}\label{eq:repr-ex-2}
\end{equation}
and is signature-safe too.
\end{example}
A labeled polynomial can have arbitrary number of representations:
for example we can add elements corresponding to a syzygy to any representation
and combine elements with identical monomials and polynomials to get
the correct representation. The result will be representation of the
same polynomial because sum of syzygy elements is equal to 0.
\begin{example}
The product of two polynomial from $G_{g}$ has two representations
of the form (\ref{eq:repr-ex-2}) which differs in syzygy addition:
\end{example}
\[
b_{j}b_{i}=\sum_{k}m_{i_{k}}\cdot b_{j}=\sum_{k}m_{i_{k}}\cdot b_{j}+0=\sum_{k}m_{i_{k}}\cdot b_{j}+\left(\sum_{k}m_{j_{k}}\cdot b_{i}-\sum_{k}m_{i_{k}}\cdot b_{j}\right)=\sum_{k}m_{j_{k}}\cdot b_{i},
\]
where $m_{i_{k}}$ are terms of $b_{i}$ and $m_{j_{k}}$ are terms
of $b_{j}$.
\begin{example}
The zero polynomial has an empty representation and an representation
for every syzygy:
\end{example}
\[
0=\sum_{\emptyset}\mbox{(empty sum)}=\sum_{k}m_{j_{k}}\cdot b_{i}+\sum_{k}(-m_{i_{k}})\cdot b_{j},
\]
where $m_{i_{k}}$ and $m_{j_{k}}$ are same as above.

Another important example of $G_{g}$-representation comes from ideal
and signature definitions. All labeled polynomials computed by the
algorithm are elements of ideal $\left(f_{1},\ldots,f_{m}\right)$.
So any labeled polynomial $p$ can be written as $ $$\sum_{i}f_{i}g_{i}$,
where $g_{i}$ are homogeneous polynomials. All input polynomials
$f_{i}$ belong to $G_{g}$, so $f_{i}g_{i}$ has $G_{g}$-representations
of the form (\ref{eq:repr-ex-2}).
\begin{example}
Those representations sum give the following signature-safe representation:
\[
p=\sum_{k}m_{k}\cdot b_{i_{k}},\, m_{k}\in\mathcal{K}\times T,\, b_{i_{k}}\in\left\{ f_{1},\ldots,f_{m}\right\} \subset G_{g}.
\]
\end{example}
\begin{defn}
This particular case of $G_{g}$-representation where $b_{i_{k}}$
are limited to input polynomials will be called \emph{input-representation}.
\end{defn}
Input representations always has the only element with maximal signature.
This property is special to input-representations because generic
$G_{g}$-representations can have multiple elements with same maximal
signature -- it is possible to have $m_{1}\Sig(b_{i_{1}})=m_{2}\Sig(b_{i_{2}})$
while $i_{1}\ne i_{2}$.

The following claim makes important connection between signatures
and input-representations:
\begin{claim}
An admissible labeled polynomial $p$ with known signature $\Sig(p)$
has an input-representation consisting of an element $c\Sig(p)\cdot f_{index(p)}$
and some other elements with smaller signatures.\end{claim}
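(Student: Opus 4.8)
The plan is to unpack the definition of admissibility and then reuse the split-into-terms construction already used for the earlier examples of $G_g$-representations. Recall that a labeled polynomial $p$ with $\sigidx(p)=k$ and $\Sig(p)=c\,u\,F_k$ (with $u\in T$ the monomial part and $c\in\mathcal{K}$) is \emph{admissible} exactly when there exist homogeneous $g_1,\dots,g_k\in\mathcal{K}[x_1,\dots,x_n]$ with $\poly(p)=\sum_{j\le k}g_j f_j$, $\HM(g_k)=u$, and the induced element $\sum_{j\le k}g_j e_j$ of the free module has signature $u\,F_k$; equivalently $\HM(g_j)F_j\preccurlyeq u\,F_k$ for every $j$, with equality only for $j=k$. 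First I would fix such a representation and split each $g_j$ into its terms, $g_j=\sum_\ell c_{j\ell}t_{j\ell}$ with the $t_{j\ell}$ pairwise distinct monomials, obtaining
\[
p=\sum_{j\le k}\ \sum_\ell c_{j\ell}\,t_{j\ell}\cdot f_j .
\]
Since for each fixed $j$ the $t_{j\ell}$ are distinct and the $f_j$ for different $j$ are distinct entries of the list $G_g$, all pairs $\bigl(t_{j\ell},f_j\bigr)$ are distinct, so this is a genuine $G_g$-representation; it is an input-representation because every $b_{i_k}$ appearing is one of $f_1,\dots,f_m$.

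Next I would locate the top element. The element signature of $c_{j\ell}t_{j\ell}\cdot f_j$ is $t_{j\ell}F_j$, and this equals $\Sig(p)=u\,F_k$ only if $j=k$ and $t_{j\ell}=u$; because $u=\HM(g_k)$ is the largest monomial of $g_k$ and the $t_{k\ell}$ are distinct, there is exactly one such element, namely $c\,u\cdot f_k=c\,\Sig(p)\cdot f_{index(p)}$, where $c$ is the head coefficient of $g_k$. For every remaining element I would check the signature is strictly smaller: if $j=k$ and the term is not the head term of $g_k$, then $t_{k\ell}\prec u$, hence $t_{k\ell}F_k\prec u\,F_k$; if $j\ne k$, admissibility gives $\HM(g_j)F_j\preccurlyeq u\,F_k$, with equality excluded since the indices differ, so $\HM(g_j)F_j\prec u\,F_k$, and since every term of $g_j$ satisfies $t_{j\ell}\preccurlyeq\HM(g_j)$ we get $t_{j\ell}F_j\preccurlyeq\HM(g_j)F_j\prec u\,F_k$. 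Thus the representation consists of the single element $c\,\Sig(p)\cdot f_{index(p)}$ together with elements of strictly smaller signature, as claimed.

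I expect the only real subtlety to be bookkeeping about the signature order: one must make sure the ordering $\prec$ on monomial–index pairs used in the excerpt matches the one implicit in Faug\`ere's definition of admissibility — in particular that distinct indices are always strictly comparable, so that $\HM(g_j)F_j\preccurlyeq u\,F_k$ with $j\ne k$ forces strict inequality. A second, minor point is to rule out the degenerate case $g_k=0$: this cannot occur since $p$ has a well-defined signature $\Sig(p)=c\,u\,F_k$, so $g_k\ne 0$ and $u$ is genuinely its head monomial. Everything else is the routine term-splitting argument already employed for the input-representation example above.
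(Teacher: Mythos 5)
Your argument is correct and is essentially the paper's own (one-line) proof spelled out: the paper simply observes that the claim follows from Faug\`ere's definition of admissibility via the map $v$, i.e.\ from term-splitting the module representation $\sum_j g_j f_j$ exactly as you do, with the head term of $g_{\sigidx(p)}$ giving the element $c\Sig(p)\cdot f_{\sigidx(p)}$ and all other terms giving strictly smaller signatures. The only cosmetic slip is writing $\Sig(p)=c\,u\,F_k$ with a field coefficient inside the signature; the coefficient belongs to the representation element, as you in fact use later.
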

\begin{proof}
The claimed fact follows from the admissible polynomial definition
in \cite{FaugereF5} referring to function $v$ which correspond to
summing representation elements.
\end{proof}
The theorem 1 of \cite{FaugereF5} states that all polynomials in
the algorithm are admissible, do the above claim will be applied to
all appeared polynomials.
\begin{example}
\label{example-of-having-gg-repr}The last example comes from S-pairs
with a computed $G_{g}$-representation. S-polynomial of $b_{l^{*}}$
and $b_{l_{*}}$ from $G_{g}$ is $p=\overline{u_{l}}\poly(b_{l^{*}})-\underline{u_{l}}\poly(b_{l_{*}})$.
It is known from reduction process that for such S-pairs $p$ is signature-safe
reduced and the result is added to $G_{g}$ as some labeled polynomial
$b_{l}$. So the $G_{g}$-representation is:
\end{example}
\[
p=\sum_{k}m_{k}\cdot b_{n_{k}}+1\cdot b_{l},
\]
where signatures of $m_{k}\cdot b_{n_{k}}$ elements are smaller than
$\Sig(b_{l})=\Sig(p)$. The value of $l$ is position of $b_{l}$
in ordered list $G_{g}$. In this representation $l$ is greater than
$l^{*}$ and $l_{*}$ because corresponding labeled polynomial $b_{l}$
is added to $R$ at the moment of S-polynomial computation in \texttt{Spol}
or \texttt{TopReduction} so the polynomials $b_{l^{*}}$ and $b_{l_{*}}$
used to create S-pair already present in $R$ at that moment and the
order of $G_{g}$ correspond to order of $R$.

\subsection{Ordering representations}
\begin{defn}
To order $G_{g}$-representations we start from \emph{representation
elements ordering} $\gtrdot_{1}$: we say that $c_{i}t_{i}\cdot b_{i}\gtrdot_{1}c_{j}t_{j}\cdot b_{j}$
if one of the following cases holds:
\begin{itemize}
\item $t_{i}\Sig(b_{i})\succ t_{j}\Sig(b_{j})$ 
\item $t_{i}\Sig(b_{i})=t_{j}\Sig(b_{j})$ and $i<j$ (note the opposite
order).
\end{itemize}
\end{defn}
This ordering is based only on comparison of signatures and positions
of labeled polynomials in the ordered list $G_{g}$ but does not depend
on the field coefficient. The only case in which two elements can't
be ordered is equality of both signatures $t_{i}\Sig(b_{i})=t_{j}\Sig(b_{j})$
and positions in list $i=j$. Position equality means $b_{i}=b_{j}$
which in conjunction with signature equality gives $t_{i}=t_{j}$.
So any two elements that belong to a single $G_{g}$-representation
are comparable with $\lessdot_{1}$ order because they have distinct
$\left(t_{k},b_{k}\right)$ by definition. Below are given some examples
of $\lessdot_{1}$ element ordering for the 3-element list $G_{g}=\left\{ b_{1},\, b_{2},\, b_{3}\right\} $
with ordering $x\mathbf{F}_{i}\succ y\mathbf{F}_{i}$ and signatures
$\Sig(b_{1})=\mathbf{F}_{1},\Sig(b_{2})=\mathbf{F}_{2},\Sig(b_{3})=x\mathbf{F}_{1}$.
\begin{itemize}
\item $y\cdot b_{1}\gtrdot_{1}100y\cdot b_{2}$ because signature of left
side is $\succ$
\item $x\cdot b_{1}\gtrdot_{1}y\cdot b_{1}$ because signature of left side
is $\succ$
\item $-x\cdot b_{1}$ and $2x\cdot b_{1}$ are not comparable because signatures
and list indexes are equal
\item $y^{2}\cdot b_{1}\lessdot_{1}y\cdot b_{3}$ because signature of left
side is $\prec$
\item $x^{2}\cdot b_{1}\gtrdot_{1}x\cdot b_{3}$ because signatures are
equal and the list position of left side's labeled polynomial is
1 which is smaller than right side's position 3.
\end{itemize}
To extend this order to entire $G_{g}$-representations consider \emph{ordered
form} of representation consisting of all its elements written in
a list with $\gtrdot_{1}$-decreasing order. This form can be used
for equality testing because if two representations are equal then
they have exactly equal ordered forms.
\begin{defn}
With ordered forms the \emph{$G_{g}$-representations ordering} can
be introduced: the representation $\sum_{k}m'_{k}\cdot b_{i'_{k}}$
is $\lessdot$-smaller than $\sum_{k}m_{k}\cdot b_{i_{k}}$ if the
ordered form of the first representation is smaller than second's
according to lexicographical extension of $\lessdot_{1}$ ordering
on elements. For the corner case of the one ordered form being beginning
of the other the shorter form is $\lessdot$-smaller. If the greatest
different elements of ordered forms differ only in field coefficient
the representations are not comparable.
\end{defn}
Some examples of this ordering are given for the same as above 3-element
$G_{g}$ list. Note that all $G_{g}$-representations are already
written in ordered forms:
\begin{itemize}
\item $x^{2}\cdot b_{1}+xy\cdot b_{1}+y^{2}b_{1}\gtrdot x^{2}\cdot b_{1}+100y^{2}\cdot b_{1}$
because $xy\cdot b_{1}\gtrdot y^{2}\cdot b_{1}$
\item $x^{2}\cdot b_{1}+100y^{2}\cdot b_{1}\gtrdot x^{2}\cdot b_{1}$ because
the right ordered form is beginning of the left
\item $x^{2}\cdot b_{1}\gtrdot xy\cdot b_{1}+y^{2}\cdot b_{1}+x^{2}\cdot b_{2}$
because $x^{2}\cdot b_{1}\gtrdot xy\cdot b_{1}$
\item $xy\cdot b_{1}+y^{2}\cdot b_{1}+x^{2}\cdot b_{2}\gtrdot y\cdot b_{3}+y^{2}\cdot b_{1}+x^{2}\cdot b_{2}$
because $xy\cdot b_{1}\gtrdot y\cdot b_{3}$
\item $y\cdot b_{3}+y^{2}\cdot b_{1}+x^{2}\cdot b_{2}$ and $2y\cdot b_{3}+y^{2}\cdot b_{2}$
are not comparable because the greatest different elements are $y\cdot b_{3}$
and $2y\cdot b_{3}$.
\end{itemize}
The ordering is compatible with signature-safety:
\begin{thm}
If two representations of $p$ has a relation $\sum_{k}m'_{k}\cdot b_{i'_{k}}\lessdot\sum_{k}m_{k}\cdot b_{i_{k}}$
and the second one is signature-safe representation then the first
one is signature-safe too.\end{thm}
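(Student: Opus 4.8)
The plan is to reduce signature-safety of a whole representation to a condition on its single $\gtrdot_1$-maximal element, and then to read that condition off the lexicographic comparison of ordered forms directly. The first observation is that the primary key of $\gtrdot_1$ is the element signature $t_k\Sig(b_{i_k})$, with a larger signature meaning larger in $\gtrdot_1$. Hence the leading element $e_{\max}$ of the ordered form of any $G_g$-representation has the $\succ$-largest element signature among all its elements: every other element $e$ satisfies $\Sig(e)\prec\Sig(e_{\max})$ or $\Sig(e)=\Sig(e_{\max})$, so $\Sig(e)\preccurlyeq\Sig(e_{\max})$ in all cases. Consequently a $G_g$-representation of $p$ is signature-safe if and only if it is empty or its $\gtrdot_1$-maximal element has signature $\preccurlyeq\Sig(p)$.

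Next I would dispose of the corner cases before the main step. If $\sum_k m'_k\cdot b_{i'_k}$ is the empty representation it is signature-safe vacuously. If it is non-empty, then since the empty ordered form is a prefix of every ordered form and therefore $\lessdot$-minimal, the hypothesis $\sum_k m'_k\cdot b_{i'_k}\lessdot\sum_k m_k\cdot b_{i_k}$ forces the second representation to be non-empty as well; write $e'_{\max}$ and $e_{\max}$ for the leading entries of the two ordered forms.

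The core of the argument is the comparison of $e'_{\max}$ with $e_{\max}$. In the lexicographic comparison of the ordered forms these are the first entries, so $e'_{\max}\gtrdot_1 e_{\max}$ would give $\sum_k m'_k\cdot b_{i'_k}\gtrdot\sum_k m_k\cdot b_{i_k}$, contradicting the hypothesis; and $e'_{\max}$ being $\gtrdot_1$-incomparable with $e_{\max}$ would mean they differ only in a field coefficient and in particular share the same signature (this case is in fact already excluded by the hypothesis $\lessdot$ at the leading position, but it costs nothing to note it). So either $e'_{\max}=e_{\max}$, or the two leading elements have equal signature, or $e'_{\max}\lessdot_1 e_{\max}$, in which latter case the definition of $\lessdot_1$ yields $\Sig(e'_{\max})\prec\Sig(e_{\max})$ or $\Sig(e'_{\max})=\Sig(e_{\max})$. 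In every case $\Sig(e'_{\max})\preccurlyeq\Sig(e_{\max})$. Since the second representation is signature-safe, $\Sig(e_{\max})\preccurlyeq\Sig(p)$, hence $\Sig(e'_{\max})\preccurlyeq\Sig(p)$, and by the first observation $\sum_k m'_k\cdot b_{i'_k}$ is signature-safe.

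I do not expect a genuine obstacle in this theorem; the only delicate points are the bookkeeping of the "not comparable" case and the empty-representation corner case, which is exactly why I would handle both explicitly before invoking the one-line comparison of leading elements. The essential content is simply that $\lessdot$ on representations can only decrease the signature of the leading element, and the leading element's signature is the maximal element signature.
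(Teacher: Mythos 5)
Your proposal is correct and is essentially the elaboration of the paper's own one-line argument: the paper appeals to the fact that elements of a $\lessdot$-smaller representation cannot have signatures exceeding those of the $\gtrdot$-greater one, and your reduction to the leading elements of the ordered forms (whose signatures are maximal because signature is the primary key of $\gtrdot_{1}$) is precisely how that fact is justified. The empty-representation and incomparable-leading-element cases you handle explicitly are harmless bookkeeping, not a different route.
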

\begin{proof}
This theorem quickly follows from a fact that elements of a $\lessdot$-smaller
representation can't has signatures $\succ$-greater than signatures
of $\gtrdot$-greater representation.
\end{proof}
The key fact allowing to take $\lessdot$-minimal element is well-orderness:
\begin{thm}
The representations are well-ordered with $\lessdot$ ordering.\end{thm}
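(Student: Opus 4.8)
The plan is to show that $\lessdot$ has no infinite strictly descending chain of $G_g$-representations, which is exactly the well-foundedness the statement asserts; note that $\lessdot$ is not a total order (coefficient-only differences are deliberately left incomparable), so ``well-ordered'' must be read as ``every nonempty family of $G_g$-representations has a $\lessdot$-minimal member'', and that is equivalent to the absence of infinite descending chains. Since $\lessdot$ refers only to the ordered forms, I would argue directly about strictly $\gtrdot_1$-decreasing finite lists of elements and never use that the representations represent one and the same polynomial.

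The first step is to check that the element order $\lessdot_1$, read on element \emph{types} --- the pairs $(t_k,b_{i_k})\in T\times G_g$, on which $\lessdot_1$ does not depend --- is a genuine well-order. The map sending a type $(t,b)$ to the pair consisting of the signature $t\Sig(b)$ and the position of $b$ in the ordered list $G_g$ is injective: the position recovers $b$, and then $t\Sig(b)$ recovers $t$ because $\Sig(b)\ne 0$. It carries $\lessdot_1$ to the lexicographic order that compares signatures by $\prec$ and breaks ties by the reversed position inside the finite list $G_g$. The signature order $\prec$ is a well-order, since the term order on $T$ is a well-order and there are only finitely many generators $\mathbf{F}_1,\dots,\mathbf{F}_m$; the reversed-position order on the finite set $G_g$ is a well-order trivially; and a lexicographic product of two well-orders is again a well-order. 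Hence $\lessdot_1$ well-orders types, and in particular any two distinct types are strictly $\lessdot_1$-comparable, so the ordered form of a $G_g$-representation --- whose elements have pairwise distinct types by definition --- is a strictly $\gtrdot_1$-decreasing finite list.

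The core of the argument is then the familiar fact that the lexicographic order on finite strictly-decreasing lists over a well-ordered set is well-founded, adapted to the coefficient caveat. Assuming an infinite chain $\rho_1\gtrdot\rho_2\gtrdot\cdots$, I would look at the first (i.e.\ $\gtrdot_1$-largest) element of each $\rho_i$: the relation $\rho_i\gtrdot\rho_{i+1}$ forces this element either to drop strictly in type or to stay identical as a full element, because keeping the type while changing the coefficient would make the two ordered forms incomparable. So the types of the first elements are $\gtrdot_1$-non-increasing, hence eventually constant by the well-order of $\lessdot_1$; past that point the coefficient is forced constant as well, and $\rho_i\gtrdot\rho_{i+1}$ then passes to the tails obtained by deleting the common first element. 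These tails are again nonempty --- an empty tail would make one form a proper prefix of the next, which contradicts strict descent --- and form an infinite strictly $\gtrdot$-descending chain whose common first type $e_2$ satisfies $e_1\gtrdot_1 e_2$. Iterating this, one extracts an infinite strictly $\gtrdot_1$-descending sequence of types $e_1\gtrdot_1 e_2\gtrdot_1\cdots$, contradicting the first step.

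The hard part is purely the bookkeeping: one must make sure the ``differ only in field coefficient'' incomparability is handled so that it never manufactures a descent, that the successive tails stay nonempty so the recursion genuinely continues, and that ``eventually constant'' (the correct conclusion for a non-increasing sequence in a well-order) is what is used. I would also double-check the one external ingredient, that the signature order of F5 really is a well-order on the module terms $t\Sig(b)$ that can occur here; this needs only the finiteness of the generator set and the well-orderedness of the term order on $T$, both of which hold.
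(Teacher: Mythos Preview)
Your argument is correct and follows the same two-step strategy as the paper: first establish that $\lessdot_1$ is a well-order on element types, then lift this to the lexicographic order on ordered forms. The difference is only in the second step: the paper dispatches it by citing the multiset well-ordering theorem (Baader--Nipkow, Theorem~2.5.5), whereas you give the direct ``first entries stabilise, pass to tails, extract an infinite $\gtrdot_1$-descending sequence of types'' argument. Your version is self-contained and handles the coefficient-incomparability clause explicitly, which the paper's citation glosses over; the paper's version is shorter but relies on the reader knowing that the lexicographic order on strictly decreasing lists coincides with (a restriction of) the multiset order. Both are sound, and neither buys anything the other lacks beyond this trade-off between brevity and self-containment.
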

\begin{proof}
The number of different labeled polynomial positions is finite because
it is equal to $|G_{g}|$ which is finite for fixed $g$. So the existence
of infinite $\gtrdot_{1}$-descending sequence of representation elements
would lead to existence of infinite $\succ$-descending sequence of
signatures. Combining this with well-orderness of signatures with
ordering $\prec$ we get the proof for well-orderness of elements
with ordering $\lessdot{}_{1}$.

The straightforward proof for $\lessdot$-well-orderness of representations
following from $\lessdot_{1}$-well-orderness of elements is not very
complex but to skip its strict details the theorem 2.5.5 of \cite{Baader.Nipkow98Term}
will be referenced. It states well-orderness of finite multiset with
an lexicographically extended ordering of well-ordered elements. This
applies to the representations because they form a subset in the finite
multiset of representation elements.
\end{proof}

\subsection{Sequence of representations}

The idea of this part is constructing a finite sequence of strictly
$\lessdot$-descending signature-safe $G_{g}$-representations for
a given labeled polynomial $mh,\, m\in\mathcal{K}\times T,\, h\in G_{g}$
with $\Sig(mh)\prec\Sig(g)$. The first signature-safe representation
in the sequence is $mh=m\cdot h$, the last representation is $mh=\sum_{k}m_{k}\cdot b_{i_{k}}$
with elements having the following properties $\forall k$:
\begin{enumerate}
\item $m_{k}b_{i_{k}}$ does not satisfy F5 criterion.
\item $m_{k}b_{i_{k}}$ does not satisfy Rewritten criterion.
\item $\HM(m_{k}b_{i_{k}})\leqslant\HM(mh)$
\end{enumerate}
The proof of such sequence existence is very similar to Theorem 21
of \cite{F5C} and is based on a fact, that if a some signature-safe
representation of $mh$ contains an element $m_{K}\cdot b_{i_{K}}$
not having one of the properties then a $\lessdot$-smaller representation
can be constructed. The exact construction differ for three cases
but the replacement scheme is the same:
\begin{itemize}
\item a some element $m_{K'}\cdot b_{i_{K'}}$ in $mh$ representation is
selected. Note that $K'$ in some cases is not equal to $K$
\item some representation $m_{K'}b_{i_{K'}}=\sum_{l}m_{l}\cdot b_{i_{l}}$
is constructed for this element.
\item it is shown that constructed representation is $\lessdot$-smaller
than representation $m_{K'}b_{i_{K'}}=m_{K'}\cdot b_{i_{K'}}$ 
\end{itemize}
Construction of such representation for $m_{K'}\cdot b_{i_{K'}}$
allows application of the following lemma:
\begin{lem}
If an element $m_{K'}\cdot b_{i_{K'}}$ of signature-safe representation
$mh=\sum_{k}m_{k}\cdot b_{i_{k}}$ has an representation $m_{K'}b_{i_{K'}}=\sum_{l}m_{l}\cdot b_{i_{l}}$
which is $\lessdot$-smaller than representation $m_{K'}b_{i_{K'}}=m_{K'}\cdot b_{i_{K'}}$
then $mh$ has a signature-safe representation $\lessdot$-smaller
than $mh=\sum_{k}m_{k}\cdot b_{i_{k}}$.\end{lem}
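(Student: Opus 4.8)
The plan is to prove the lemma by a single substitution followed by a bookkeeping argument. Starting from the signature-safe representation $mh=\sum_{k}m_{k}\cdot b_{i_{k}}$, I would delete the element $m_{K'}\cdot b_{i_{K'}}$ and splice in all the elements of the given representation $m_{K'}b_{i_{K'}}=\sum_{l}m_{l}\cdot b_{i_{l}}$, then combine any elements sharing the same monomial and the same labeled polynomial by adding their field coefficients and discarding every element whose coefficient becomes $0$. Because $\sum_{l}m_{l}\cdot b_{i_{l}}$ is a representation of the labeled polynomial $m_{K'}b_{i_{K'}}$, replacing the polynomial $m_{K'}b_{i_{K'}}$ inside the sum by $\sum_{l}m_{l}b_{i_{l}}$ leaves the total polynomial unchanged, and neither combining like terms nor dropping zero terms changes it; hence the result is a genuine $G_{g}$-representation of $mh$. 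What remains is to check that it is signature-safe and strictly $\lessdot$-smaller than $mh=\sum_{k}m_{k}\cdot b_{i_{k}}$.

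For signature-safety I would invoke the already-proved theorem that a $\lessdot$-smaller representation of a polynomial inherits signature-safety from a $\lessdot$-larger one. The one-element representation $m_{K'}b_{i_{K'}}=m_{K'}\cdot b_{i_{K'}}$ is trivially signature-safe, its unique element having signature exactly $\Sig(m_{K'}b_{i_{K'}})$; so the given $\lessdot$-smaller representation $\sum_{l}m_{l}\cdot b_{i_{l}}$ of that same polynomial is signature-safe, i.e.\ every $\Sig(m_{l}b_{i_{l}})\preccurlyeq\Sig(m_{K'}b_{i_{K'}})$. Since the starting representation of $mh$ is signature-safe, $\Sig(m_{K'}b_{i_{K'}})\preccurlyeq\Sig(mh)$, so every spliced-in element has signature $\preccurlyeq\Sig(mh)$; the retained original elements already did; and combining like terms only sums field coefficients and thus cannot raise a signature. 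Hence the new representation is signature-safe.

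For the strict decrease, write $e^{*}=m_{K'}\cdot b_{i_{K'}}$. The key point is that $\sum_{l}m_{l}\cdot b_{i_{l}}\lessdot e^{*}$ forces \emph{every} element $m_{l}\cdot b_{i_{l}}$ to lie strictly $\lessdot_{1}$-below $e^{*}$: otherwise the $\gtrdot_{1}$-greatest element of $\sum_{l}m_{l}\cdot b_{i_{l}}$ would be $\gtrdot_{1}e^{*}$, or would differ from $e^{*}$ only in field coefficient, or would equal $e^{*}$, and in none of those three cases would $\sum_{l}m_{l}\cdot b_{i_{l}}$ be $\lessdot$-smaller than $e^{*}$. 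Now split the ordered form of the starting representation as $A,\,e^{*},\,B$, where $A$ lists the elements that are $\gtrdot_{1}$-above $e^{*}$ and $B$ those $\lessdot_{1}$-below it (every two elements of one representation being $\lessdot_{1}$-comparable). Every spliced-in element and every element of $B$ is $\lessdot_{1}$-below $e^{*}$, hence cannot share a monomial-and-labeled-polynomial pair with any element of $A$ — an element $\gtrdot_{1}$-above $e^{*}$ and one $\lessdot_{1}$-below $e^{*}$ cannot have equal signature and equal position. Therefore combining like terms leaves the head block $A$ untouched, and the ordered form of the new representation is $A$ followed by a possibly empty list of elements each $\lessdot_{1}$-below $e^{*}$. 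Comparing lexicographically with $A,\,e^{*},\,B$: the two agree along $A$, and then the new form is either exhausted, hence a proper prefix and $\lessdot$-smaller, or its next element is $\lessdot_{1}$-below $e^{*}$, hence again $\lessdot$-smaller. This gives the desired strictly $\lessdot$-smaller signature-safe representation.

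I expect the main obstacle to be the bookkeeping in the last step: confirming that merging the spliced-in elements into the tail $B$ cannot touch the head block $A$ and cannot produce an element that is merely $\lessdot_{1}$-incomparable with $e^{*}$ instead of strictly below it. Once it is established that everything newly introduced is strictly $\lessdot_{1}$-below $e^{*}$ while the head block $A$ is frozen, the lexicographic comparison is immediate and the signature-safety step is a one-line appeal to the monotonicity theorem already established in the paper.
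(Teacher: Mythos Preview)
Your proof is correct and follows essentially the same route as the paper: substitute the $\lessdot$-smaller representation for the element $m_{K'}\cdot b_{i_{K'}}$, combine like terms, and observe that all elements $\gtrdot_{1}$-above $e^{*}$ survive unchanged while $e^{*}$ itself disappears and everything newly introduced lies strictly $\lessdot_{1}$-below $e^{*}$, which forces the lexicographic drop. Your treatment is in fact more careful than the paper's, making explicit both the signature-safety verification and the reason the head block $A$ cannot interact with the spliced-in elements during merging, points the paper leaves implicit.
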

\begin{proof}
We replace $m_{K'}\cdot b_{i_{K'}}$ in $mh=\sum_{k}m_{k}\cdot b_{i_{k}}$
by $\sum_{l}m_{l}\cdot b_{i_{l}}$ and combine coefficients near elements
with both monomial and polynomial equal, so a modified representation
for $mh$ appears. Is is $\lessdot$-smaller than $mh=\sum_{k}m_{k}\cdot b_{i_{k}}$
because all elements $\gtrdot_{1}$-greater than $m_{K'}\cdot b_{i_{K'}}$
are identical in both representations if they present but the element
 $m_{K'}\cdot b_{i_{K'}}$ is contained in original representation
but not in the modified. And all other elements in representations
are $\lessdot_{1}$-smaller than $m_{K'}\cdot b_{i_{K'}}$ so they
does not influence the comparison. The comparison holds even in a
corner case when all elements are discarded while combining coefficients.
This case can appear if the original representation is equal to $mh=m_{K'}\cdot b_{i_{K'}}+\sum_{l}(-m_{l})\cdot b_{i_{l}}$
what leads to modified representation $mh=0$ with zero elements which
is $\lessdot$-smaller than any non-empty representation.
\end{proof}
Now it will be shown that replacement scheme can be performed if the
representation contains an element not satisfying at least one of
three properties.
\begin{lem}
If a signature-safe $G_{g}$-representation $mh=\sum_{k}m_{k}\cdot b_{i_{k}}$
does not satisfy property 1 then there exists an element $m_{K'}\cdot b_{i_{K'}}$
having $G_{g}$-representation $m_{K'}b_{i_{K'}}=\sum_{l}m_{l}\cdot b_{i_{l}}$
which is $\lessdot$-smaller than representation $m_{K'}b_{i_{K'}}=m_{K'}\cdot b_{i_{K'}}$
.
\end{lem}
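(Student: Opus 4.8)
The plan is to take $K'=K$, where $m_K\cdot b_{i_K}$ is an element of the given representation that \emph{satisfies} the F5 criterion — such $K$ exists precisely because property 1 fails — and to construct a $\lessdot$-smaller $G_g$-representation of the single labeled polynomial $m_K b_{i_K}$; the enclosing lemma then applies. Write $m_K=c_K t_K$, set $j=\mathrm{index}(b_{i_K})$ and let $\sigma_K$ be the signature monomial of $b_{i_K}$, so that $\Sig(m_K b_{i_K})=t_K\sigma_K\mathbf{F}_j$. By the claim on admissible polynomials, $b_{i_K}$ has an input-representation $b_{i_K}=c_0\sigma_K\cdot f_j+\sum_l n_l\cdot f_{j_l}$ with every $f_{j_l}$ among the inputs (hence in $G_g$) and $n_l\Sig(f_{j_l})\prec\sigma_K\mathbf{F}_j$. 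Multiplying through by $m_K$ and splitting monomials gives a $G_g$-representation of $m_K b_{i_K}$ whose only element of signature $t_K\sigma_K\mathbf{F}_j$ is $c_0c_K\,t_K\sigma_K\cdot f_j$, all other elements having strictly smaller signature (using that $j_l>j$, or $j_l=j$ and $n_l<\sigma_K$, in the F5 order).

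The core step is to rewrite the element $c_0c_K\,t_K\sigma_K\cdot f_j$ using the F5 criterion. By definition, $m_K b_{i_K}$ satisfying the F5 criterion means $\varphi_{j+1}(t_K\sigma_K)\ne t_K\sigma_K$, i.e. the monomial $t_K\sigma_K$ is top-reducible by $G_{j+1}$, so there is a nonzero $h\in G_{j+1}$ with $\HM(h)\mid t_K\sigma_K$; write $t_K\sigma_K=\tau\,\HM(h)$. Since $j+1\ge 2$, the basis $G_{j+1}$ was completed before the call processing $f_1$ and $G_1$ is built on top of it, so $h\in G_1\subseteq G_g$. Splitting the polynomial $\tau f_j$ into terms gives a $G_g$-representation $\tau h f_j=\sum_\nu (c_\nu\nu)\cdot h$, every element of which has signature $\nu\Sig(h)$ of index $\ge j+1$, hence $\prec t_K\sigma_K\mathbf{F}_j$. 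On the other hand $\tau h$ has head term a constant times $t_K\sigma_K$ and all other terms monomials $\mu<t_K\sigma_K$, so $\tau h f_j$ equals that same constant times $t_K\sigma_K f_j$ plus a sum $\sum_\mu d_\mu\,\mu f_j$ of elements $\mu\cdot f_j$ of signature $\mu\mathbf{F}_j\prec t_K\sigma_K\mathbf{F}_j$. Rearranging expresses $c_0c_K\,t_K\sigma_K\cdot f_j$ as a $G_g$-sum of elements all of signature $\prec t_K\sigma_K\mathbf{F}_j$. Substituting this into the representation from the first paragraph and combining coefficients of equal monomial--polynomial pairs yields a $G_g$-representation of $m_K b_{i_K}$ all of whose elements have signature $\prec t_K\sigma_K\mathbf{F}_j$; since the trivial representation $m_K b_{i_K}=c_K t_K\cdot b_{i_K}$ has its unique, hence $\gtrdot_1$-maximal, element of signature exactly $t_K\sigma_K\mathbf{F}_j$, the constructed representation is $\lessdot$-smaller, and $K'=K$ works.

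I expect the main obstacle to be the bookkeeping around the F5-criterion rewriting: confirming that the $h$ produced by the non-normalization condition actually lies in $G_g$ — which rests on the incremental structure $G_{j+1}\subseteq G_1\subseteq G_g$ and on $j+1$ being a previously processed index (so that, in particular, $j\le m-1$ whenever the criterion fires) — and, more delicately, checking that \emph{every} element produced by the rewriting genuinely has signature $\prec\Sig(m_K b_{i_K})$. The latter reduces to the two comparisons built into the construction: $\mathrm{index}\ge j+1$ implies smaller signature than anything of index $j$ in the F5 module order, and $\mu<t_K\sigma_K$ for the lower terms of $\tau h$. The peeling-off of the leading $f_j$-term via admissibility and the final coefficient-combining (which can only delete elements, never raise a signature) are then routine.
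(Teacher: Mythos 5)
Your proof is correct and follows essentially the same route as the paper: take $K'=K$, expand $m_{K}b_{i_{K}}$ via its input-representation, and use the F5-criterion divisor together with the identity $s_{0}f_{j_{0}}=s_{1}hf_{j_{0}}-s_{1}\left(h-\HM(h)\right)f_{j_{0}}$ to replace the maximal element by elements of strictly smaller signature (smaller index for the multiples of the divisor, smaller monomial for the lower terms, and the remaining input-representation elements already being smaller). The only cosmetic difference is that you take the divisor $h$ from $G_{j+1}$, which is if anything more faithful to the criterion's definition than the paper's writing it as an input polynomial $f_{j_{1}}$ with $j_{1}>j_{0}$; the signature comparisons and the final $\lessdot$-comparison are identical.
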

An element not having the first property does satisfy the F5 criterion
and the idea is to use that $m_{K}\Sig(b_{i_{K}})$ is not the minimal
signature of $m_{K}b_{i_{K}}$ like in Theorem 20 of \cite{F5C}.
$K'=K$ is taken for this case.
\begin{proof}
Consider input-representation of $m_{K}b_{i_{K}}$ with signature
of $\gtrdot_{1}$-maximal element equal to $m_{K}\Sig(b_{i_{K}})=s_{0}\mathbf{F}_{j_{0}}$:

\begin{equation}
m_{K}b_{i_{K}}=c_{0}s_{0}\cdot f_{j_{0}}+\sum_{l}m_{l}\cdot f_{i_{l}}.\label{eq:input-repr-case1}
\end{equation}
From the satisfying F5 criterion $s_{0}$ can be expressed like $s_{0}=s_{1}\HM(f_{j_{1}}),\, j_{1}>j_{0}$
so $s_{0}f_{j_{0}}=s_{1}f_{j_{0}}f_{j_{1}}-s_{1}(f_{j_{1}}-\HM(f_{j_{1}}))f_{j_{0}}$.
From this we can write another representation for $m_{K}b_{i_{K}}$,
assuming $m_{0i}$ are sorted terms of $f_{j_{0}}$, $m_{1i}$ are
sorted terms of $f_{j_{1}}$ and $N_{0,}N_{1}$ are number of terms
in those polynomials:

\[
m_{K}b_{i_{K}}=\sum_{i=1}^{N_{0}}c_{0}s_{1}m_{0i}\cdot f_{j_{1}}+\sum_{i=2}^{N_{1}}-c_{0}s_{1}m_{1i}\cdot f_{j_{0}}+\sum_{l}m_{l}\cdot f_{i_{l}}.
\]
This representation is $\lessdot$-smaller than $m_{K}\cdot b_{i_{K}}$
because signatures of all elements are smaller than $s_{0}\mathbf{F}_{j_{0}}$.
For the elements of the third sum $\sum_{l}m_{l}\cdot f_{i_{l}}$
this follows from \textbf{\ref{eq:input-repr-case1}}, where those
elements are smaller elements of input-representation. For the elements
of the first sum $\sum_{i=1}^{N_{0}}c_{0}s_{1}m_{0i}\cdot f_{j_{1}}$
this follows from the position inequality $j_{1}>j_{0}$. And for
the second sum we use the equality in term and signature orderings:
all terms $m_{1i},\, i\geqslant2$ are smaller than $m_{11}$, so
the signatures are: $s_{1}m_{1i}\mathbf{F}_{j_{0}}\prec s_{1}m_{11}\mathbf{F}_{j_{0}}=s_{0}\mathbf{F}_{j_{0}}$. \end{proof}
\begin{lem}
If a signature-safe $G_{g}$-representation $mh=\sum_{k}m_{k}\cdot b_{i_{k}}$
with $\Sig(mh)\prec\Sig(g)$ does not satisfy property 2 then there
exists an element $m_{K'}\cdot b_{i_{K'}}$ having $G_{g}$-representation
$m_{K'}b_{i_{K'}}=\sum_{l}m_{l}\cdot b_{i_{l}}$ which is $\lessdot$-smaller
than representation $m_{K'}b_{i_{K'}}=m_{K'}\cdot b_{i_{K'}}$ \textup{.}
\end{lem}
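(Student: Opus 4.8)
The plan is to take $K'=K$, where $m_K\cdot b_{i_K}$ (write $m_K=c_Kt_K$) is an element of the given representation that violates property~2, i.e.\ satisfies the Rewritten criterion. By definition this supplies a labeled polynomial $b_j\in G_g$ with position $j>i_K$ and $\Sig(b_j)\mid t_K\Sig(b_{i_K})$; since divisibility of signatures forces equal signature indices, put $a=\sigidx(b_{i_K})=\sigidx(b_j)$, $\Sig(b_{i_K})=w\mathbf{F}_a$, $\Sig(b_j)=v\mathbf{F}_a$, and let $s=t_Kw/v\in T$, so that $s\Sig(b_j)=t_K\Sig(b_{i_K})=\Sig(m_Kb_{i_K})$. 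What I want to produce is a $G_g$-representation of $m_Kb_{i_K}$ whose $\gtrdot_1$-greatest element is a monomial multiple of $b_j$; since $j>i_K$ such an element is $\lessdot_1$-smaller than $m_K\cdot b_{i_K}$, hence the whole representation is $\lessdot$-smaller than $m_Kb_{i_K}=m_K\cdot b_{i_K}$, which is the conclusion sought (so that the earlier replacement lemma then yields a $\lessdot$-smaller signature-safe representation of $mh$).

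First I would apply the claim that an admissible labeled polynomial has an input-representation whose $\gtrdot_1$-maximal element has signature equal to its own: write $b_{i_K}=c_1w\cdot f_a+P_1$ and $b_j=c_2v\cdot f_a+P_2$ with nonzero scalars $c_1,c_2$, where every element of $P_1$ (resp.\ $P_2$) has signature $\prec\Sig(b_{i_K})$ (resp.\ $\prec\Sig(b_j)$). Here $\Sig(mh)\prec\Sig(g)$ together with signature-safety gives $\Sig(b_j)\preccurlyeq\Sig(m_Kb_{i_K})\preccurlyeq\Sig(mh)\prec\Sig(g)$, so $b_j$ is a fully processed element of $G_g$ and its polynomial value and input-representation are final. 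Then I would scale the first identity by $c_Kt_K$ and the second by $\lambda s$ with $\lambda=c_Kc_1/c_2\ne 0$: both now have the identical leading element $c_Kc_1\cdot(t_Kw)\cdot f_a$ of signature $t_K\Sig(b_{i_K})$, while — using compatibility of the signature ordering with monomial multiplication — every remaining element of $c_Kt_KP_1$ and of $\lambda sP_2$ has signature $\prec t_K\Sig(b_{i_K})$. Subtracting, the leading $f_a$-elements cancel, so $m_Kb_{i_K}-\lambda s\,b_j=c_Kt_KP_1-\lambda sP_2$ has an input-representation, hence a $G_g$-representation, all of whose elements have signature $\prec t_K\Sig(b_{i_K})$.

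Rearranging yields the $G_g$-representation $m_Kb_{i_K}=\lambda s\cdot b_j+(\text{elements of signature}\prec t_K\Sig(b_{i_K}))$. After combining like coefficients this is a legitimate $G_g$-representation, and its $\gtrdot_1$-greatest element is $\lambda s\cdot b_j$, which has signature $t_K\Sig(b_{i_K})$ and position $j>i_K$; therefore $\lambda s\cdot b_j\lessdot_1 m_K\cdot b_{i_K}$ and the new representation is $\lessdot$-smaller than $m_Kb_{i_K}=m_K\cdot b_{i_K}$, completing the argument. I expect the main obstacle to be the bookkeeping in the cancellation step: verifying that after the chosen scalings the two input-representations really share the same leading element and that no element of the residual part attains signature $t_K\Sig(b_{i_K})$ — this is precisely where compatibility of the signature order with multiplication and the ``and smaller signatures'' clause of the admissibility claim are used. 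One minor degenerate case must be dismissed, namely when the residual part collapses to zero: then $m_Kb_{i_K}=\lambda s\cdot b_j$ is a single-element representation, which is still $\lessdot$-smaller than $m_K\cdot b_{i_K}$.
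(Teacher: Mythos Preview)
Your proof is correct and follows essentially the same approach as the paper: take $K'=K$, use the rewriter $b_j$ with later position $j>i_K$, verify via signature-safety and $\Sig(mh)\prec\Sig(g)$ that $b_j$ actually lies in $G_g$, then combine the input-representations of $m_Kb_{i_K}$ and $s\,b_j$ to obtain a representation of $m_Kb_{i_K}$ whose $\gtrdot_1$-maximal element is a multiple of $b_j$, hence $\lessdot_1$-smaller by the position clause. The only cosmetic difference is that you scale both input-representations and subtract, whereas the paper solves the input-representation of $s'b_{i'}$ for the leading $f_a$-term and substitutes into that of $m_Kb_{i_K}$; the two are algebraically identical.
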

For the elements not satisfying case 2 the $\lessdot$-smaller representation
is created in a way used in Proposition 17 of \cite{F5C}. $K'=K$
is taken for this case too.
\begin{proof}
Assume that $\Sig(m_{K}b_{i_{K}})=s_{0}\mathbf{F}_{j_{0}}$ and it
is rewritten by labeled polynomial $b_{i'}$ from $R$. Because the
representation is signature-safe we have $\Sig(b_{i'})\preccurlyeq s_{0}\mathbf{F}_{j_{0}}\preccurlyeq\Sig(mh)\prec\Sig(g)$.
So $b_{i'}$ was processed in \texttt{TopReduction} before $g$. Since
$b_{i'}$ is rewriter we have $b_{i'}\ne0$. All this gives the fact
that $b_{i'}$ does present not only in $R$ but in $G_{g}$ too so
it can be used as a polynomial of $G_{g}$-representation element.
From the Rewritten criterion definition we know that $i'>i_{K}$ and
the existence of $s'\in T$ such that $s'\Sig(b_{i'})=s_{0}\mathbf{F}_{j_{0}}$.
So, for the $m_{K}b_{i_{K}}$ there is an input-representation \ref{eq:input-repr-case1}
and for the $s'b_{i'}$ the input-representation is:

\[
s'b_{i'}=c's_{0}\cdot f_{j_{0}}+\sum_{l'}m_{l'}\cdot f_{i_{l'}}.
\]
A $G_{g}$-representation for $c_{0}s_{0}f_{j_{0}}$ can be acquired
with transformation of the above expression:

\[
c_{0}s_{0}f_{j_{0}}=c'^{-1}c_{0}s'\cdot b_{i'}+\sum_{l'}-c'^{-1}c_{0}m_{l'}\cdot f_{i_{l'}}.
\]
Using this to replace the first element in \ref{eq:input-repr-case1}
we get the wanted result:
\[
m_{K}b_{i_{K}}=c'^{-1}c_{0}s'\cdot b_{i'}+\sum_{l'}-c'^{-1}c_{0}m_{l'}\cdot f_{i_{l'}}+\sum_{l}m_{l}\cdot f_{i_{l}}
\]
It is $\lessdot$-smaller than $m_{K}b_{i_{K}}=m_{K}\cdot b_{i_{K}}$
because elements of both sums has signatures smaller than $s_{0}\mathbf{F}_{j_{0}}$,
and for the first element $ $$\Sig(c'^{-1}c_{0}s'\cdot b_{i'})=\Sig(m_{K}\cdot b_{i_{K}})=s_{0}\mathbf{F}_{j_{0}}$
but $i'>i_{K}$, so applying the $\lessdot_{1}$-comparison rule for
equal signatures and different list positions we get that element
$c'^{-1}c_{0}s'\cdot b_{i'}$ is $\lessdot_{1}$-smaller than $m_{K}\cdot b_{i_{K}}$
too.\end{proof}
\begin{lem}
If a signature-safe representation $mh=\sum_{k}m_{k}\cdot b_{i_{k}}$
with $\Sig(mh)\prec\Sig(g)$ satisfies properties 1 and 2 but does
not satisfy property 3 then there exists an element $m_{K'}\cdot b_{i_{K'}}$
having representation $m_{K'}b_{i_{K'}}=\sum_{l}m_{l}\cdot b_{i_{l}}$
which is $\lessdot$-smaller than representation $m_{K'}b_{i_{K'}}=m_{K'}\cdot b_{i_{K'}}$.\end{lem}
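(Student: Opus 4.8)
The plan is to carry out the critical-pair step of Theorem~21 of \cite{F5C}: failure of property~3 forces the head monomials of the top elements of the representation to cancel, which exhibits an S-pair of $G_{g}$ elements; running it through Theorem~\ref{thm:Exist-gg-repr} yields the required $\lessdot$-smaller representation of a single element, after which the preceding replacement lemma finishes the job. Concretely, I would put $t^{*}=\max_{k}\HM(m_{k}b_{i_{k}})$ and $S=\{k:\HM(m_{k}b_{i_{k}})=t^{*}\}$. Reading $mh=\sum_{k}m_{k}b_{i_{k}}$ as an identity of polynomials, the coefficient of the monomial $t^{*}$ on the right is $\sum_{k\in S}c_{k}\,\mathrm{lc}(\poly(b_{i_{k}}))$, while on the left it is $0$ because failure of property~3 gives $\HM(mh)\prec t^{*}$; since all $c_{k}$ and leading coefficients are nonzero this forces $|S|\geqslant 2$. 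Moreover the elements of $S$ have pairwise distinct element signatures: if $k,k'\in S$ had $\Sig(m_{k}b_{i_{k}})=\Sig(m_{k'}b_{i_{k'}})$, then distinctness of the pairs $(t_{k},b_{i_{k}})$ together with the common head monomial $t^{*}$ forces $i_{k}\neq i_{k'}$, say $i_{k}<i_{k'}$, and then $b_{i_{k'}}\in G_{g}$ has larger position and $\Sig(b_{i_{k'}})\mid\Sig(m_{k}b_{i_{k}})$, so $m_{k}b_{i_{k}}$ would satisfy the Rewritten criterion, contradicting property~2. Let $K'$ be the element of $S$ whose signature is $\succ$-maximal, and fix any other $b\in S$; then $i_{b}\neq i_{K'}$ and $\Sig(m_{b}b_{i_{b}})\prec\Sig(m_{K'}b_{i_{K'}})$.

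Next I would form the S-pair of $b_{i_{K'}}$ and $b_{i_{b}}$. Write $m_{K'}=c_{K'}t_{K'}$, $m_{b}=c_{b}t_{b}$, $L=\mathrm{lcm}(\HM(b_{i_{K'}}),\HM(b_{i_{b}}))$, $u_{K'}=L/\HM(b_{i_{K'}})$, $u_{b}=L/\HM(b_{i_{b}})$, $w=t^{*}/L$; since $t^{*}$ is a common multiple of $\HM(b_{i_{K'}})$ and $\HM(b_{i_{b}})$ we have $L\mid t^{*}$ and the identities $t_{K'}=w\,u_{K'}$, $t_{b}=w\,u_{b}$. Cancelling the common factor $w$ from $\Sig(m_{b}b_{i_{b}})=w\,u_{b}\Sig(b_{i_{b}})\prec w\,u_{K'}\Sig(b_{i_{K'}})=\Sig(m_{K'}b_{i_{K'}})$ shows the part of the S-pair with larger signature is $u_{K'}b_{i_{K'}}$, so $l^{*}=i_{K'}$ and the S-pair signature is $\sigma=u_{K'}\Sig(b_{i_{K'}})\preccurlyeq\Sig(m_{K'}b_{i_{K'}})\preccurlyeq\Sig(mh)\prec\Sig(g)$. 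Hence Theorem~\ref{thm:Exist-gg-repr} applies. Cases (1) and (2) are impossible: if one of the parts $u_{K'}b_{i_{K'}}$, $u_{b}b_{i_{b}}$ satisfied the F5 (resp.\ Rewritten) criterion, then by the ``a further multiple satisfies it too'' property its multiple $t_{K'}b_{i_{K'}}$ (resp.\ $t_{b}b_{i_{b}}$) would too, i.e.\ the element $m_{K'}b_{i_{K'}}$ (resp.\ $m_{b}b_{i_{b}}$) of the representation would violate property~1 (resp.\ property~2). So the S-pair is of type (3): writing its S-polynomial as $\mathrm{Spol}=\alpha\,u_{K'}b_{i_{K'}}+\beta\,u_{b}b_{i_{b}}$ with $\alpha\,\mathrm{lc}(b_{i_{K'}})+\beta\,\mathrm{lc}(b_{i_{b}})=0$, Example~\ref{example-of-having-gg-repr} gives a $G_{g}$-representation $\mathrm{Spol}=\sum_{j}\mu_{j}\cdot b_{n_{j}}+\varepsilon\cdot b_{l}$ with $\Sig(\mu_{j}b_{n_{j}})\prec\sigma$, where either $\varepsilon=0$ (if $\mathrm{Spol}$ reduced to zero) or $\varepsilon=1$, $\Sig(b_{l})=\sigma$ and the position $l>l^{*}=i_{K'}$.

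Finally I would solve this equality for $u_{K'}b_{i_{K'}}$, multiply through by $c_{K'}w$ (using $c_{K'}w\,u_{K'}b_{i_{K'}}=m_{K'}b_{i_{K'}}$ and $w\,u_{b}=t_{b}$), and combine like coefficients, obtaining a $G_{g}$-representation of $m_{K'}b_{i_{K'}}$ whose elements are: a $b_{i_{b}}$-element of monomial part $t_{b}$ and signature $\Sig(m_{b}b_{i_{b}})$; the $b_{n_{j}}$-elements of signature $w\Sig(\mu_{j}b_{n_{j}})$; and, if $\varepsilon=1$, a $b_{l}$-element of monomial part $w$ and signature $w\sigma$. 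Each of these is $\lessdot_{1}$-smaller than $m_{K'}\cdot b_{i_{K'}}$: the $b_{i_{b}}$-element has signature $\prec\Sig(m_{K'}b_{i_{K'}})$ by the choice of $K'$; the $b_{n_{j}}$-elements have signature $\prec w\sigma=\Sig(m_{K'}b_{i_{K'}})$; and the $b_{l}$-element has signature exactly $\Sig(m_{K'}b_{i_{K'}})$ but the larger position $l>i_{K'}$, hence is $\lessdot_{1}$-smaller by the position tie-break in $\gtrdot_{1}$. Since combining like terms neither enlarges signatures nor introduces new positions, the resulting representation of $m_{K'}b_{i_{K'}}$ (possibly the empty one, in the cancellation corner case) is $\lessdot$-smaller than $m_{K'}\cdot b_{i_{K'}}$, which is exactly the element demanded by the lemma.

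The main obstacle is precisely the $b_{l}$-element: it inherits the same element signature as the element it replaces, so the whole argument hinges on the position bound $l>l^{*}=i_{K'}$ from Example~\ref{example-of-having-gg-repr} together with the convention that $\gtrdot_{1}$ breaks signature ties by \emph{smaller} position. A secondary point requiring care is that the F5 S-pair uses the lcm-cofactors $u_{K'},u_{b}$, which are in general proper divisors of the representation multipliers $t_{K'},t_{b}$; they must be reconciled through the monomial $w=t^{*}/L$ so that scaling the S-polynomial relation really returns a representation of $m_{K'}b_{i_{K'}}$ rather than of $u_{K'}b_{i_{K'}}$. One should also double-check the degenerate sub-cases $w=1$, $u_{K'}=1$, $\HM(b_{i_{K'}})\mid\HM(b_{i_{b}})$ and $\varepsilon=0$, but the argument above is written so as to cover them uniformly.
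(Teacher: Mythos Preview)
Your proof is correct and follows essentially the same route as the paper: pick the $\gtrdot_{1}$-largest element among those achieving the maximal head monomial, pair it with another such element, invoke Theorem~\ref{thm:Exist-gg-repr} to rule out the two criteria cases via properties 1--2 and the ``further multiple'' stability, and then substitute the computed $G_{g}$-representation of the S-polynomial (Example~\ref{example-of-having-gg-repr}) into a single element, using the position bound $l>l^{*}$ for the signature tie-break. Your write-up is in places more explicit than the paper's --- the argument that distinct elements of $S$ must have distinct element signatures (via property~2), the separate treatment of $\varepsilon=0$, and the bookkeeping with $w=t^{*}/L$ reconciling the lcm-cofactors with the representation multipliers --- but these are refinements of the same argument rather than a different one.
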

\begin{proof}
There exists at least one element $m_{K}\cdot b_{i_{K}}$ that does
not satisfy property 3. Let $m_{\max}$ be the maximal $\HM$ of labeled
polynomials corresponding to representation elements and $H_{\max}$
be a list of elements where $m_{\max}$ is achieved. Select $K'$
to be the index of the $\gtrdot_{1}$-greatest representation element
in $H_{\max}$. We have $ $$\HM(m_{K'}b_{i_{K'}})=m_{\max}\geqslant\HM(m_{K}b_{i_{K}})>\HM(mh)$,
so the $\HM$ of sum of all elements except $K'$ is equal to $\HM(mh-m_{K'}b_{i_{K'}})=\HM(m_{K'}b_{i_{K'}})=m_{\max}$,
so there is another element $K''$ having $\HM(m_{K''}b_{i_{K''}})=m_{\max}$.
So, $m_{K''}\cdot b_{i_{K''}}\in H_{\max}$ and $m_{K''}\cdot b_{i_{K''}}\lessdot_{1}m_{K'}\cdot b_{i_{K'}}$
because of $ $$m_{K'}\cdot b_{i_{K'}}$ $\gtrdot_{1}$-maximality
in $H_{\max}$.

The $\HM(m_{K''}b_{i_{K''}})=\HM(m_{K'}b_{i_{K'}})$ means that a
critical pair of $b_{i_{K'}}$ and $b_{i_{K''}}$ has the form $[m'^{-1}m_{\max},\, m'^{-1}m_{K'},\, b_{i_{K'}},\, m'^{-1}m_{K''},\, b_{i_{K''}}]$
where $m'=\mbox{gcd}(m_{K'},m_{K''})$. Let $q$ be corresponding
S-polynomial. Then $m'\Sig(q)\preccurlyeq\Sig(mh)\prec\Sig(g)$ because
$m'\Sig(q)=\Sig(m_{K'}b_{i_{K'}})$ and the representation is signature-safe.
The S-polynomial parts $m'^{-1}m_{K'}b_{i_{K'}}$ and $m'^{-1}m_{K''}b_{i_{K''}}$
does not satisfy F5 and Rewritten criteria because their forms multiplied
by $m'$ are $m_{K'}b_{i_{K'}}$ and $m_{K''}b_{i_{K''}}$ -- labeled
polynomials corresponding to elements which are known not to satisfy
both criteria by assumption. Therefore $m'\Sig(q)\prec\Sig(g)$ and
$\Sig(q)\prec\Sig(g)$. It follows from this with theorem \ref{thm:Exist-gg-repr}
that the S-pair $(b_{i_{K'}},b_{i_{K''}})$ is S-pair with computed
$G_{g}$-representation, what means that there is an representation
described in example \ref{example-of-having-gg-repr} :

\[
q=1\cdot b_{i'}+\sum_{l}m_{l}\cdot b_{i_{l}},
\]
satisfying the properties shown after that example: $\Sig(q)=\Sig(b_{i'})$,
$\forall l\,\Sig(q)\succ\Sig(m_{l}b_{i_{l}})$ and $i'>K'$.

From the other hand we have $m'q=c_{0}m_{K'}b_{i_{K'}}-c_{1}m_{K''}b_{i_{K''}}$,
so we get the following representation:
\[
m_{K'}b_{i_{K'}}=c_{0}^{-1}c_{1}m_{K''}\cdot b_{i_{K''}}+c_{0}^{-1}m'\cdot b_{i'}+\sum_{l}c_{0}^{-1}m'm_{l}\cdot b_{i_{l}}.
\]
It is $\lessdot$-smaller than $m_{K'}b_{i_{K'}}=m_{K'}\cdot b_{i_{K'}}$: 

$m_{K''}\cdot b_{i_{K''}}$ was already compared to $m_{K'}\cdot b_{i_{K'}}$ 

$m'\cdot b_{i'}$ has the same signature but greater position $i'>i_{K'}$

the last sum contains elements with signatures smaller than $m'\Sig(b_{i'})=\Sig(m_{K'}\cdot b_{i_{K'}})$.\end{proof}
\begin{thm}
\label{thm:exist-smaller-signature-safe-representation}A signature-safe
representation $mh=\sum_{k}m_{k}\cdot b_{i_{k}}$ with $\Sig(mh)\prec\Sig(g)$
either satisfies properties 1-3 or there exists a signature-safe representation
$mh=\sum_{l}m_{l}\cdot b_{i_{l}}$ which is $\lessdot$-smaller than
\textup{$\sum_{k}m_{k}\cdot b_{i_{k}}$.}\end{thm}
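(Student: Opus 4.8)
The plan is to prove this by a case analysis on which of the three properties fails, each case being reduced to the replacement lemma already established. Suppose the signature-safe representation $mh=\sum_{k}m_{k}\cdot b_{i_{k}}$ does not already satisfy all of properties 1--3 (if it does, we are in the first alternative and there is nothing to prove). Then at least one of the three properties fails, and I would examine the properties in the order 1, 2, 3, producing in each case an element $m_{K'}\cdot b_{i_{K'}}$ of the representation equipped with a $G_{g}$-representation $m_{K'}b_{i_{K'}}=\sum_{l}m_{l}\cdot b_{i_{l}}$ that is $\lessdot$-smaller than the trivial representation $m_{K'}b_{i_{K'}}=m_{K'}\cdot b_{i_{K'}}$; feeding that element into the replacement lemma then yields a signature-safe $G_{g}$-representation of $mh$ strictly $\lessdot$-below $\sum_{k}m_{k}\cdot b_{i_{k}}$, which is exactly the second alternative.

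Concretely: if property 1 fails, the lemma about elements satisfying the F5 criterion supplies the required element and its smaller representation, and the replacement lemma finishes. If property 1 holds but property 2 fails, the lemma about elements satisfying the Rewritten criterion does the same — and here the hypothesis $\Sig(mh)\prec\Sig(g)$ is precisely what that lemma requires, which is why it is part of the statement being proved. Finally, if properties 1 and 2 both hold but property 3 fails, the third lemma (the one bounding $\HM$'s, whose statement is formulated exactly under the assumption that 1 and 2 already hold) produces the smaller representation of some element, and one last application of the replacement lemma completes the argument.

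The proof is thus essentially bookkeeping over these three cases. The only point that needs care is the order of the case split: property 3 must be treated last, since its associated lemma presupposes that properties 1 and 2 already hold, so one should phrase the argument as ``either property 1 fails, or it holds and either property 2 fails, or both hold and property 3 fails.'' It is also worth noting explicitly that the representation output by the replacement lemma is signature-safe — this is guaranteed by that lemma directly, or alternatively by the earlier compatibility theorem between $\lessdot$ and signature-safety applied to the new, $\lessdot$-smaller representation. I do not expect any genuine obstacle here beyond verifying that each invocation of the three structural lemmas has its hypotheses in place, in particular $\Sig(mh)\prec\Sig(g)$.
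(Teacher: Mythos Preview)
Your proposal is correct and matches the paper's approach exactly: the paper's proof is the single sentence ``This theorem quickly follows from four previous lemmas together,'' and what you have written is precisely the unpacking of that sentence, including the correct observation that the property-3 lemma must be invoked last because it presupposes properties 1 and 2.
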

\begin{proof}
This theorem quickly follows from four previous lemmas together
\end{proof}
This leads to main result:
\begin{thm}
For any labeled polynomial $mh,\, m\in\mathcal{K}\times T,\, h\in G_{g}$
with $\Sig(mh)\prec\Sig(g)$ there exists a signature-safe $G_{g}$-representation
$mh=\sum_{k}m_{k}\cdot b_{i_{k}}$ that satisfies properties 1-3\textup{.}\end{thm}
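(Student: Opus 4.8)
The plan is to combine Theorem~\ref{thm:exist-smaller-signature-safe-representation} with the well-orderness of $\lessdot$ via a standard infinite-descent argument. First I would fix the labeled polynomial $mh$ with $\Sig(mh)\prec\Sig(g)$ and start the construction from the trivial signature-safe $G_g$-representation $mh = m\cdot h$ exhibited in the examples section (this is signature-safe because $\Sig(m\cdot h)=\Sig(mh)$). Call this $\mathcal{R}_0$.

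Next I would build a sequence $\mathcal{R}_0 \gtrdot \mathcal{R}_1 \gtrdot \mathcal{R}_2 \gtrdot \cdots$ of signature-safe $G_g$-representations of $mh$ by repeatedly applying Theorem~\ref{thm:exist-smaller-signature-safe-representation}: given $\mathcal{R}_n$, either it already satisfies properties 1--3 and we stop, or the theorem produces a strictly $\lessdot$-smaller signature-safe representation $\mathcal{R}_{n+1}$ of the same polynomial $mh$ (still satisfying $\Sig(mh)\prec\Sig(g)$, so the hypothesis of the theorem is preserved along the whole chain). Then I would invoke the well-orderness of representations under $\lessdot$: a strictly $\lessdot$-descending sequence must be finite, so the process terminates after finitely many steps at some representation $\mathcal{R}_n = \sum_k m_k\cdot b_{i_k}$ which, not admitting any further $\lessdot$-smaller signature-safe representation, must satisfy all three properties by Theorem~\ref{thm:exist-smaller-signature-safe-representation}. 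That $\mathcal{R}_n$ is the desired representation.

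There is essentially no obstacle here — all the work has already been done in the lemmas and in Theorem~\ref{thm:exist-smaller-signature-safe-representation}; this is just the descent wrap-up. The only point requiring a word of care is that the hypothesis $\Sig(mh)\prec\Sig(g)$ needed by the theorem at every step refers to the \emph{fixed} polynomial $mh$ and not to any changing quantity, so it holds throughout the iteration for free. I would write the proof in two sentences: start from $mh = m\cdot h$, apply Theorem~\ref{thm:exist-smaller-signature-safe-representation} repeatedly to get a $\lessdot$-descending chain of signature-safe representations, which must be finite by well-orderness, and the terminal representation satisfies properties 1--3.

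\begin{proof}
By the examples in Section~5.2 the trivial $G_{g}$-representation $mh=m\cdot h$ is signature-safe, since $\Sig(m\cdot h)=\Sig(mh)$. Starting from it we construct a sequence of signature-safe $G_{g}$-representations of $mh$ as follows: given a signature-safe representation of $mh$, theorem \ref{thm:exist-smaller-signature-safe-representation} asserts that it either already satisfies properties 1--3 or it admits a strictly $\lessdot$-smaller signature-safe representation of $mh$; in the latter case we pass to that smaller one and repeat. Since $mh$ is fixed, the hypothesis $\Sig(mh)\prec\Sig(g)$ holds for every representation in the sequence, so theorem \ref{thm:exist-smaller-signature-safe-representation} is applicable at each step. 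The representations in the sequence are strictly $\lessdot$-decreasing, but the $\lessdot$ ordering is a well-ordering on representations, so the sequence is finite. Its last element is a signature-safe $G_{g}$-representation $mh=\sum_{k}m_{k}\cdot b_{i_{k}}$ which admits no strictly $\lessdot$-smaller signature-safe representation of $mh$, hence by theorem \ref{thm:exist-smaller-signature-safe-representation} it satisfies properties 1--3.
\end{proof}
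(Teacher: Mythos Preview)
Your proof is correct and follows exactly the same approach as the paper: start from the trivial signature-safe representation $mh=m\cdot h$, repeatedly apply theorem~\ref{thm:exist-smaller-signature-safe-representation} to obtain a strictly $\lessdot$-descending chain, and invoke $\lessdot$-well-orderness to terminate at a representation satisfying properties 1--3. Your write-up is slightly more detailed (you spell out why the hypothesis $\Sig(mh)\prec\Sig(g)$ persists), but the argument is identical.
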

\begin{proof}
Start with representation $mh=m\cdot h$ and begin replacing it by
$\lessdot$-smaller representation from theorem \ref{thm:exist-smaller-signature-safe-representation}
until the representation satisfying properties 1-3 appears. The finiteness
of the process is guaranteed by $\lessdot$-well-orderness.
\end{proof}
This result may be interesting by itself, but for the purposes of
proving termination only one corollary is needed:
\begin{cor}
\label{cor:all-needed-for-terminaton}Consider an arbitrary polynomial
$f$ without any restrictions on its signature. If there exists a
signature-safe reductor $f'\in G_{g}$ for $f$ with $\Sig(f')\frac{\HM(f)}{\HM(f')}\prec\Sig(g)$
then $G_{g}$ contains a signature-safe reductor for $f$ that is
not rejected by F5 and Rewritten criteria.\end{cor}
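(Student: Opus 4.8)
The plan is to deduce this corollary directly from the last displayed theorem, applied to a single carefully chosen labeled polynomial. Set $t=\frac{\HM(f)}{\HM(f')}\in T$, the monomial multiplier of the given reductor, and consider the labeled polynomial $tf'$. Since $f'\in G_{g}$ and $t\in T\subset\mathcal{K}\times T$, this is a labeled polynomial of the form $mh$ with $m=t$ and $h=f'$; moreover $\Sig(tf')=t\Sig(f')=\Sig(f')\frac{\HM(f)}{\HM(f')}\prec\Sig(g)$ by hypothesis, so the theorem applies and produces a signature-safe $G_{g}$-representation $tf'=\sum_{k}m_{k}\cdot b_{i_{k}}$ in which every element $m_{k}\cdot b_{i_{k}}$ fails both the F5 criterion and the Rewritten criterion and satisfies $\HM(m_{k}b_{i_{k}})\leqslant\HM(tf')$.

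Next I would single out the reductor inside this representation. As polynomials the right-hand side sums to $tf'$, whose head monomial is $\HM(tf')=t\HM(f')=\HM(f)$; since property 3 forbids any element from having head monomial exceeding $\HM(f)$, the coefficient of $\HM(f)$ in the sum is the sum of leading coefficients of those elements whose head monomial equals $\HM(f)$, and this coefficient is nonzero. Hence there is an element $m_{K}\cdot b_{i_{K}}$ with $\HM(m_{K}b_{i_{K}})=\HM(f)$. Writing $m_{K}=c_{K}t_{K}$ with $t_{K}\in T$ gives $t_{K}\HM(b_{i_{K}})=\HM(f)$, so $\HM(b_{i_{K}})\mid\HM(f)$ and $t_{K}=\frac{\HM(f)}{\HM(b_{i_{K}})}$; thus $b_{i_{K}}\in G_{g}$ is a reductor for $f$.

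It remains to verify the extra requirements for $b_{i_{K}}$. For signature-safety, signature-safety of the representation gives $\Sig(t_{K}b_{i_{K}})=\Sig(m_{K}b_{i_{K}})\preccurlyeq\Sig(tf')=t\Sig(f')$, and $f'$ being a signature-safe reductor for $f$ gives $t\Sig(f')\prec\Sig(f)$; combining, $\Sig(f)\succ t_{K}\Sig(b_{i_{K}})$, which is exactly the signature-safety condition for reducing $f$ by $b_{i_{K}}$ (in particular $\Sig(t_{K}b_{i_{K}})\ne\Sig(f)$, so the reductor also passes checks (a) and (d) of \texttt{IsReducible}). Finally, the F5 and Rewritten criteria depend only on signatures and on positions in $G_{g}$, never on field coefficients, so $t_{K}b_{i_{K}}$ satisfies a criterion iff $m_{K}\cdot b_{i_{K}}$ does; by properties 1 and 2 it satisfies neither. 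Therefore $b_{i_{K}}$ is a signature-safe reductor for $f$ that is not rejected by the F5 or Rewritten criteria, which is the assertion.

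The only step requiring real attention is the extraction of the element with head monomial $\HM(f)$: this is the one place where property 3 is essential, and it relies on the representation equalling $tf'$ \emph{as a polynomial}, so that cancellation among the at-most-$\HM(f)$ elements cannot destroy the leading term $\HM(f)=\HM(tf')$. Everything else is routine chasing of the signature inequalities and the observation that the algorithm's criteria ignore field coefficients.
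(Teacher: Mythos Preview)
Your proof is correct and follows essentially the same approach as the paper's own proof: apply the preceding theorem to $tf'$ with $t=\frac{\HM(f)}{\HM(f')}$, then use property~3 to extract an element with head monomial $\HM(f)$ and properties~1--2 plus signature-safety to certify it as the desired reductor. Your write-up simply fills in a few details (why such an element must exist, and that the criteria ignore field coefficients) that the paper leaves implicit.
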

\begin{proof}
Let $mf',\, m=\frac{\HM(f)}{\HM(f')}\in\mathcal{K}\times T,\, f'\in G_{g}$
be a multiplied reductor with $\Sig(mf')\prec\Sig(g)$. From the previous
theorem we can find representation $mf'=\sum_{k}m_{k}\cdot b_{i_{k}}$
that satisfies properties 1-3. Property 3 means that there is no elements
with $\HM$'s greater than $mf'$ so because sum of all elements
has $\HM$ equal to $\HM(mf')$ there exists an element $K$ that
achieves $\HM$ equality: $\HM(m_{K}\cdot b_{i_{K}})=\HM(mf')=\HM(f_{1}')$.
Since the representation is signature-safe $\Sig(m_{K}\cdot b_{i_{K}})\preccurlyeq\Sig(mf')\prec\Sig(f)$
so $m_{K}b_{i_{K}}$ is a signature-safe reductor for $f$ and properties
1-2 ensure that $m_{K}b_{i_{K}}$ does not satisfy criteria.
\end{proof}

\section{Finding contradiction with the criteria enabled}

Now return to the result of theorem \ref{thm:f_g_3_props} which states
for the case of algorithm non-termination existence of a polynomials
$f',f\in G$ such that $\HM(f')|\HM(f)$, $\frac{\HM(f')}{\Sig(f')}>_{q}\frac{\HM(f)}{\Sig(f)}$.
Using this result and last corollary we construct two polynomials
leading to contradiction for the case of algorithm non-termination.
\begin{thm}
\label{thm:always-exist-ok-reductor}If the algorithm does not terminate
for some input then after some finite step the set \textup{$G\cup Done$}
contains a pair of labeled polynomials $f'_{1},f$ where:
\begin{itemize}
\item $f'_{1}$ is added to \textup{$G\cup Done$} before $f$
\item $t_{1}f'_{1}$ does not satisfy F5 and Rewritten criteria, where $t_{1}=\frac{\HM(f)}{\HM(f'_{1})}$
\item $f'_{1}$ is signature-safe reductor for $f$.
\end{itemize}
\end{thm}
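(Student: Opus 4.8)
The plan is to combine Theorem~\ref{thm:f_g_3_props} with Corollary~\ref{cor:all-needed-for-terminaton}; the only extra ingredient is the observation already used in the Fact above, namely that since original F5 employs one and the same ordering for terms and for signatures, the quotient inequality coming out of Theorem~\ref{thm:f_g_3_props} can be transported into an inequality of signatures.

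So I would start from the pair $f',f$ delivered by Theorem~\ref{thm:f_g_3_props} after finitely many steps, with $f$ generated after $f'$, $\HM(f')\mid\HM(f)$, $\frac{\HM(f')}{\Sig(f')}>_{q}\frac{\HM(f)}{\Sig(f)}$ and $\Sig(f')\mid\Sig(f)$. The infinite S-pair-chain underlying Theorem~\ref{thm:f_g_3_props} lives inside the last, non-terminating call to \texttt{AlgorithmF5} (the one processing $f_{1}$), so every element of it, in particular $f$, has signature index $1$. Hence I may take the fixed polynomial ``$g$'' of the previous section to be $f$ itself and let $G_{g}=G_{1}\cup Done$ be the set at the moment just before $f$ is put into $Done$. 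Because $f'$ precedes $f$ in an S-pair-chain, $f'$ was used there as a greater S-pair part and therefore already sits in $G\cup Done$ before $f$ is even generated, a fortiori before $f$ enters $Done$; so $f'\in G_{g}$.

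Next, unfolding the definition of $>_{q}$ in $\frac{\HM(f')}{\Sig(f')}>_{q}\frac{\HM(f)}{\Sig(f)}$ and using compatibility of the two orderings, with $t_{1}=\frac{\HM(f)}{\HM(f')}\in T$ one obtains $\Sig(f)\succ t_{1}\Sig(f')$, that is $\Sig(f')\frac{\HM(f)}{\HM(f')}\prec\Sig(f)=\Sig(g)$. Together with $\HM(f')\mid\HM(f)$ this says exactly that $f'\in G_{g}$ is a signature-safe reductor for $f$ whose multiplied signature is strictly below $\Sig(g)$, so the hypothesis of Corollary~\ref{cor:all-needed-for-terminaton} is met with its arbitrary polynomial taken to be $f$. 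The corollary then produces a signature-safe reductor $f'_{1}\in G_{g}$ for $f$ that is rejected by neither the F5 nor the Rewritten criterion; unwinding the definitions, with $t_{1}=\frac{\HM(f)}{\HM(f'_{1})}$ the multiplied reductor $t_{1}f'_{1}$ satisfies neither criterion (the second bullet) and $f'_{1}$ is a signature-safe reductor for $f$ (the third bullet). Finally $f'_{1}\in G_{g}=G_{1}\cup Done$ was present before $f=g$ was adjoined to $Done$, hence entered $G\cup Done$ strictly before $f$ (the first bullet), and after the finite step at which $f$ is added both $f'_{1}$ and $f$ lie in $G\cup Done$.

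The main obstacle is not a deep one but a bookkeeping one: pinning down the precise algorithm state ``just before $g=f$ is added to $Done$'' and checking that at that state $f'$ really is an element of $G_{g}$ and that the strict bound $\Sig(f')\frac{\HM(f)}{\HM(f')}\prec\Sig(g)$ really holds, so that Corollary~\ref{cor:all-needed-for-terminaton} applies verbatim; once this is in place the three claimed properties are immediate from the corollary's conclusion.
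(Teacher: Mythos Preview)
Your proposal is correct and follows essentially the same route as the paper: take the pair $f',f$ from Theorem~\ref{thm:f_g_3_props}, set $g=f$, convert the quotient inequality into the signature bound $\Sig(f')\tfrac{\HM(f)}{\HM(f')}\prec\Sig(g)$, and invoke Corollary~\ref{cor:all-needed-for-terminaton} to obtain $f'_1\in G_g$ with the three required properties. If anything, you are more explicit than the paper in justifying that $f$ has signature index~$1$ and that $f'\in G_g$; the only cosmetic blemish is reusing the symbol $t_1$ first for $\tfrac{\HM(f)}{\HM(f')}$ and then for $\tfrac{\HM(f)}{\HM(f'_1)}$.
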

\begin{proof}
Let $f',f$ be polynomials from the theorem \ref{thm:f_g_3_props}
an define $t=\frac{\HM(f)}{\HM(f')}$. We have $f\in G$ so the above
theory about representations can be applied to the fixed value of
$g$ equal to $f$ and we can speak about $G_{f}$ set and $G_{f}$-representations.
Because $tf'$ is a signature-safe reductor for $f$ we have $\Sig(f')t\prec\Sig(f)$
and the corollary \ref{cor:all-needed-for-terminaton} can be applied
to find a signature-safe reductor $t_{1}f'_{1}$ for $f$ which does
not satisfy criteria. Also it is known to belong to $G_{f}$, so during
the algorithm execution $f'_{1}$ was appended to $G\cup Done$ before
$f$.\end{proof}
\begin{thm}
The original $ $F5 algorithm as described in \cite{FaugereF5} does
terminate for any input.\end{thm}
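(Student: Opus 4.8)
The plan is a short proof by contradiction, since essentially all of the work has already been carried out in Theorem \ref{thm:always-exist-ok-reductor}; what remains is a single clash with the behaviour of \texttt{TopReduction}. Suppose the original F5 algorithm does not terminate on some homogeneous input. Then Theorem \ref{thm:always-exist-ok-reductor} produces, after finitely many steps of the execution, a pair of labeled polynomials $f'_1,f\in G\cup Done$ such that $f'_1$ was appended to $G\cup Done$ before $f$, the multiplied polynomial $t_1f'_1$ with $t_1=\frac{\HM(f)}{\HM(f'_1)}$ satisfies neither the F5 criterion nor the Rewritten criterion, and $f'_1$ is a signature-safe reductor of $f$, that is $\Sig(f)\succ t_1\Sig(f'_1)$.

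First I would fix the instant of the execution at which \texttt{TopReduction} places $f$ into $Done$ carrying its final head monomial $\HM(f)$; this is precisely the algorithm state relative to which $G_g$ (with $g=f$) and the representation machinery of the earlier sections were set up, so $f'_1\in G_f=G_1\cup Done$ tells us that $f'_1$ is already present in the reductor pool scanned by \texttt{FindReductor} at that instant. Next I would push $f'_1$, with multiplier $t_1$, through the rejection tests of \texttt{IsReducible}: test (a) passes because $\HM(f'_1)\mid\HM(f)$ with quotient exactly $t_1$; tests (b) and (c) pass because $t_1f'_1$ satisfies neither the F5 criterion nor the Rewritten criterion, which are exactly the conditions these two checks test (the correspondence recorded in the proof of Theorem \ref{thm:Exist-gg-repr}); and test (d) passes because $\Sig(f)\succ t_1\Sig(f'_1)$ forces the signatures of $f$ and $t_1f'_1$ to differ. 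Hence \texttt{IsReducible} returns $f'_1$ as a legitimate reductor, and since that reduction is signature-safe, \texttt{TopReduction} reduces $f$ by $f'_1$ instead of appending $f$ to $Done$ with head monomial $\HM(f)$ --- contradicting the fact, asserted by Theorem \ref{thm:always-exist-ok-reductor}, that $f$ lies in $G\cup Done$ with that very head monomial. Therefore no non-terminating input exists and F5 terminates on every homogeneous input.

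The delicate point, and the step I expect to be the main obstacle, is the synchronisation between the algorithm state used to construct $G_f$ and the state at which \texttt{TopReduction} actually searches for a reductor of $f$, together with the bookkeeping that $f'_1\in G_f$ really does make $f'_1$ visible to that search --- in all cases, including when $f'_1$ happens to be an input polynomial of index greater than $1$, where test (d) still passes because the signature indices already differ. Once these identifications are in place, and they are essentially forced by the way $G_g$ was defined as $G_1\cup Done$ and by how Theorem \ref{thm:always-exist-ok-reductor} was established, the contradiction above closes the argument.
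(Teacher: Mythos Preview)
Your proposal is correct and follows essentially the same route as the paper: assume non-termination, invoke Theorem \ref{thm:always-exist-ok-reductor} to obtain the pair $f'_1,f$, and then obtain a contradiction by checking that $f'_1$ passes all four tests (a)--(d) of \texttt{IsReducible} at the moment $f$ is placed in $Done$, so that \texttt{IsReducible} could not have returned the empty set. Your extra paragraph on the synchronisation of the algorithm state with the definition of $G_f$ is a reasonable point of care that the paper leaves implicit, but it does not change the structure of the argument.
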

\begin{proof}
We are going o show that the existence of polynomials $f'_{1},f$
from the theorem \ref{thm:always-exist-ok-reductor} leads to contradiction.
Consider the call to \texttt{TopReduction} after which the polynomial
$f$ was inserted in $Done$. That call returns polynomial $f$ as
first part of \texttt{TopReduction} return value, so the value returned
by \texttt{IsReducible} is empty set. It means that one of conditions
(a) - (d) was not satisfied for all polynomials in $G\cup Done$ including
$f'_{1}$. This is not possible because:
\begin{itemize}
\item (a) is satisfied because $f'_{1}$ is a reductor for $f$ from the
theorem \ref{thm:always-exist-ok-reductor} 
\item (b) and (c) are satisfied because $\frac{\HM(f)}{\HM(f'_{1})}f'_{1}$
does not satisfy F5 and Rewritten criteria from the theorem \ref{thm:always-exist-ok-reductor} 
\item (d) is satisfied because $f'_{1}$ is a signature-safe reductor for
$f$ from the theorem \ref{thm:always-exist-ok-reductor}.
\end{itemize}
\end{proof}

\section{Conclusions}

This paper shows that original F5 algorithm terminates for any homogeneous
input without introducing intermediate algorithms. However, it does
not give any limit on number of operations. The simplest proof of
the termination of Buchberger algorithm is based on Noetherian property
and does not give any such limit too. Unfortunately the termination
proof given here is quite different in structure compared to the proof
of Buchberger algorithm termination, so this proof does not show that
F5 is more efficient than Buchberger in any sense. Unlike this the
termination of the modified versions of F5 algorithm in \cite{Modifying-for-termination,Ars05applicationsdes,Gash:2008:ECG}
is shown in a way analogous to Buchberger algorithm and there is room
for comparison of their efficiency with Buchberger's one.

From the point of view of practical computer algebra computations
there is a question about efficiency of the modified versions compared
to original F5. The modified versions can spend more time in additional
termination checks. But for some cases it is possible that those checks
can allow the termination of modified versions before original so
the modified version performs smaller number of reductions. So it
is possible that for some inputs the original algorithm is faster
and for others the modified version. Some experimental timings in
Table 1 in \cite{Modifying-for-termination} shows that both cases
are possible in practice but the difference in time is insignificant.
So the question about efficiency of original F5 compared to modified
versions is open.

This proof uses three properties of original F5 that are absent or
optional in some F5-like algorithms: the homogeneity of input polynomials,
the presence of Rewritten criterion and the equality of monomial order
$<$ and signature order $\prec$. The possibility of extending the
termination proof to the modified algorithms without these properties
is open question. There is an unproved idea that the proof can be
modified to remove reliance on the first two properties but not on
the third property of orders equality because it is key point of coming
to a contradiction form the result of theorem \ref{thm:f_g_3_props}.\\

\thanks{The author would like to thank Christian Eder, Jean-Charles Faugère,
Amir Hashemi, John Perry, Till Stagers and Alexey Zobnin for inspiring
me on investigations in this area by their papers and comments. Thanks!}

\bibliographystyle{short}
\bibliography{f5_references}

\end{document}